\documentclass[10pt]{article}   

\usepackage{amsmath}
\usepackage{amsfonts}
\usepackage{amssymb}
\usepackage{amsthm}
\usepackage{amssymb}

\theoremstyle{plain}
\newtheorem{theorem}{Theorem}
\newtheorem{proposition}[theorem]{Proposition}
\newtheorem{lemma}[theorem]{Lemma}
\newtheorem{corollary}[theorem]{Corollary}

\theoremstyle{definition}

\theoremstyle{remark}
\newtheorem{remark}[theorem]{Remark}

\begin{document}
\begin{center}
\vspace{3cm}

{\Large {\bf ON REAL HYPERSURFACES IN NON-FLAT COMPLEX SPACE FORMS WITH A CONDITION ON THE STRUCTURE JACOBI OPERATOR}}
\bigskip

S.H. Kon, Tee-How Loo, Shiquan Ren

{\it Institute of Mathematical Sciences, University of Malaya, 50603 Kuala Lumpur, Malaysia}

\end{center}
\bigskip

{\small {\bf Abstract.} In this paper we prove some classification theorems of real hypersurfaces in $M_n(c)$ satisfying certain conditions on the covariant derivative of the structure Jacobi operator. We also prove the non-existence of real hypersurfaces with Codazzi type structure Jacobi operator in $M_n(c)$. }\bigskip

{\it Mathematics Subject Classification(2010): 53C15 53B25}
\bigskip

{\it Keywords: complex space form, real hypersurface, Jacobi operator, totally $\eta$-umbilical real hypersurface}\bigskip
\section{Introduction}

The study of real hypersurfaces of Kaehlerian manifolds has been an important subject in the geometry of submanifolds, especially when the ambient space is a complex space form.
Complex space forms are the simplest Kaehlerian manifolds, and they are the complex case analogues of real space forms.
The complex structure of the ambient space induced an almost contact structure on its real hypersurfaces.
The interaction between the almost contact structure and shape operator results in interesting properties of real hypersurfaces in complex space forms (for instance, see \cite{jdg, montiel-romero, okumura, okumura2, okumura3, vernon}).

Let $M_{n}(c)$ be an $n$-dimensional non-flat complex space form with constant holomorphic sectional curvature $4c$.
It is known that a complete and simply connected non-flat complex space form is either a complex projective space $(c > 0)$, denoted by $CP^n$, or a complex hyperbolic space $(c < 0)$, denoted by $CH^n$.
Throughout this paper, all manifolds, vector fields, etc., will be considered of class $C^\infty$ unless
otherwise stated and it is always assumed $c=\pm 1$ and $n>2$.

We also assume that $M$ is a connected real hypersurface in $M_n(c)$, without boundary.
Since we only study local properties, we do not distinguish between $M$ and an open subset of $M$.
Let $N$ be a locally defined unit normal vector field on $M$.
Denote by $\nabla$ the Levi-Civita connection on $M$ induced from $M_n(c)$.
Let $\langle~, ~\rangle$ denote the Riemannian metric of $M$ induced from the Riemannian
metric of $M_{n}(c)$ and $A$ the shape operator of $M$ in $M_{n}(c)$.
We define a tensor field $\phi$ of type (1,1), a vector field $\xi$ and a 1-form $\eta$ by
\[
JX = \phi X + \eta (X)N,    \quad JN = - \xi,
\]
\noindent
where $X\in\Gamma(TM)$ and $\eta(X)=\langle X,\xi\rangle$. 
Let $D$, called the holomorphic distribution, denote the distribution on $M$ given by all vectors orthogonal to $\xi$ at each point of $M$.  Let $\alpha=\langle A\xi,\xi\rangle$.
A real hypersurface $M$ in $M_n(c)$ is said to be Hopf if $\xi$ is principal, i.e., $A\xi=\alpha\xi$; and
$M$ is said to be totally $\eta$-umbilical if there exists a function $\lambda$ such that $AX=\lambda X$ for all $X\in\Gamma(D)$. It has been proved that $\lambda$ must be a constant (cf. \cite{focal}, \cite{montiel}).
The following celebrated theorems classify certain families of well-behaved Hopf hypersurfaces in $CP^n$ and $CH^n$ respectively.
\begin{theorem}
(\cite{34})
\label{thcpn}
Let $M$ be a Hopf hypersurface with constant principal curvatures in $CP^n$. Then $M$ is locally congruent to a tube of radius $r$ over one of the following Keahlerian manifolds:

($A_1$) hyperplane $CP^{n-1}$, where $0<r<\pi/2$;

($A_2$) totally geodesic $CP^p$ $(0< p<n-1)$, where $0<r<\pi/2$;

(B) complex quadric $Q_{n-1}$, where $0<r<\pi/4$;

(C) $CP^1\times CP^{(n-1)/2}$, where $0<r<\pi/4$ and $n>4$ is odd;

(D) complex Grassmann $G_{2,5}$, where $0<r<\pi/4$ and $n=9$;

(E) Hermitian symmetric space $SO(10)/U(5)$, where $0<r<\pi/4$ and $n=15$.

\end{theorem}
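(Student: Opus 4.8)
The plan is to combine the rigid algebraic structure forced by the two hypotheses with the classification of isoparametric hypersurfaces of spheres, transported through the Hopf fibration. First I would record the basic identities $\nabla_X\xi=\phi AX$ and $(\nabla_X\phi)Y=\eta(Y)AX-\langle AX,Y\rangle\xi$, together with the Codazzi equation of $M_n(c)$, which for a real hypersurface reads
\[
(\nabla_XA)Y-(\nabla_YA)X=c\bigl(\eta(X)\phi Y-\eta(Y)\phi X-2\langle\phi X,Y\rangle\xi\bigr).
\]
Feeding $\xi$ into this equation and using the Hopf condition $A\xi=\alpha\xi$ shows first that $\alpha$ is constant. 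Then, for a unit $X\in\Gamma(D)$ with $AX=\lambda X$, a standard manipulation of the Codazzi equation shows that $\phi X$ is again principal, with
\[
A\phi X=\frac{\lambda\alpha+2c}{2\lambda-\alpha}\,\phi X.
\]
Because the principal curvatures are assumed constant, this splits $D$ globally into finitely many $\phi$-invariant principal subspaces and constrains, via the involution $\lambda\mapsto(\lambda\alpha+2c)/(2\lambda-\alpha)$, which eigenvalues may occur together.

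The decisive step is to pass to the circle bundle. Let $\pi\colon S^{2n+1}(1)\to CP^n$ be the Hopf fibration and set $\tilde M=\pi^{-1}(M)$, a hypersurface of the sphere. Computing the shape operator of $\tilde M$, one finds that its principal curvatures are assembled from those of $M$ together with the fibre direction, and that the vertical/horizontal coupling is controlled precisely by whether $\xi$ is principal. The upshot is the equivalence: $\tilde M$ has constant principal curvatures in $S^{2n+1}$ (i.e.\ is isoparametric) if and only if $M$ is Hopf with constant principal curvatures. Thus the hypotheses of the theorem say exactly that $\tilde M$ is an isoparametric hypersurface of the sphere.

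Now I would invoke the theory of such hypersurfaces. By M\"unzner's theorem the number $g$ of distinct principal curvatures of $\tilde M$ lies in $\{1,2,3,4,6\}$, with strong restrictions on the multiplicities. Matching these restrictions against the eigenvalue pairing of the previous paragraph, and against the classification of isoparametric families as principal orbits of isotropy representations (equivalently Takagi's list of homogeneous hypersurfaces of $CP^n$), forces $\tilde M$, and hence $M$, into finitely many families. To recover the geometric description, I would use the fact that a Hopf hypersurface with constant principal curvatures has well-defined focal maps along its normal geodesics; following Cecil--Ryan tube theory, the relevant focal set is a submanifold $M'$ on which $J$ acts as an endomorphism, so $M'$ is a K\"ahler submanifold, and $M$ is realized as a tube of a fixed radius $r$ over $M'$. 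Identifying $M'$ with one of $CP^{n-1}$, $CP^p$, $Q_{n-1}$, $CP^1\times CP^{(n-1)/2}$, $G_{2,5}$, or $SO(10)/U(5)$ completes the list.

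The main obstacle is the appearance of the exceptional cases (D) with $n=9$ and (E) with $n=15$. These correspond to isoparametric families with $g=4$ and specific multiplicities whose very existence and rigidity constitute the deepest part of isoparametric theory; ruling out any other dimension or submanifold there requires the full force of M\"unzner's multiplicity constraints together with the classification of isoparametric hypersurfaces with four principal curvatures. By contrast, once the submanifold $M'$ is known, verifying the stated radius ranges ($0<r<\pi/2$ for the type $A$ tubes, $0<r<\pi/4$ otherwise) is routine: one writes the principal curvatures of the tube as explicit trigonometric functions of $r$ and checks that the tube stays regular—i.e.\ encounters no further focal point—exactly on the indicated interval.
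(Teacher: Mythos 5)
This is a quoted result: the paper does not prove Theorem~\ref{thcpn} at all, but cites it from Kimura's paper \cite{34}, so there is no internal proof to compare against. Your outline does reproduce the broad strategy of Kimura's actual argument --- constancy of $\alpha$, the pairing $A\phi X=\frac{\alpha\lambda+2c}{2\lambda-\alpha}\phi X$ coming from the identity $2A\phi A-\alpha(A\phi+\phi A)-2c\phi=0$ (Lemma~\ref{lemma100} of the paper), the lift $\tilde M=\pi^{-1}(M)\subset S^{2n+1}$ through the Hopf fibration, M\"unzner's restriction $g\in\{1,2,3,4,6\}$, and the Cecil--Ryan focal-set/tube machinery. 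Up to that point the sketch is sound (one small caveat: the ``if and only if'' you assert between isoparametricity of $\tilde M$ and $M$ being Hopf with constant principal curvatures is only needed, and only true, in the forward direction; $S^1$-invariant isoparametric hypersurfaces can project to non-Hopf hypersurfaces).

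The genuine gap is in the decisive classification step. You propose to finish by invoking either (i) ``the classification of isoparametric hypersurfaces with four principal curvatures'' or (ii) a matching against ``Takagi's list of homogeneous hypersurfaces of $CP^n$.'' Option (i) is not legitimately available: the classification of $g=4$ isoparametric hypersurfaces is a vastly deeper theorem (Cecil--Chi--Jensen, Immervoll, Chi), completed decades after Kimura's result, and is not what the proof rests on. Option (ii) is a non sequitur as stated: isoparametric does not imply homogeneous (the Ozeki--Takeuchi and FKM families are inhomogeneous, including $S^1$-invariant examples), so one cannot simply ``match'' $\tilde M$ against the homogeneous list without first proving homogeneity --- which is essentially the whole content of the theorem. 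Kimura's route around this is concrete and is missing from your sketch: using the Hopf and constant-principal-curvature hypotheses he shows that the focal map has constant rank and that the focal set $M'$ is a K\"ahler submanifold whose second fundamental form is parallel (more precisely, whose shape operators $A_\nu$ satisfy $A_\nu^2=\mathrm{const}\cdot I$ for unit normals $\nu$), and then invokes the Nakagawa--Takagi classification of K\"ahler submanifolds of $CP^n$ with parallel second fundamental form to obtain exactly the list $CP^{n-1}$, $CP^p$, $Q_{n-1}$, $CP^1\times CP^{(n-1)/2}$, $G_{2,5}$, $SO(10)/U(5)$. Without some such argument pinning down the focal submanifold, your proposal does not close.
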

\begin{remark}
In $CP^n$, a real hypersurface is a geodesic hypersphere of radius $r$  if and only if it is a tube of radius $\frac{\pi}{2}-r$ over ${C}P^{n-1}$, where $0<r<\pi/2$.
\end{remark}
\begin{theorem}(\cite{3})
\label{thchn}
Let $M$ be a Hopf hypersurface with constant principal curvatures in $CH^n$. Then $M$ is locally congruent to one of the following:

($A_0$) a horosphere;

($A_1$) a geodesic hypersphere $CH^0$ or a tube over a hyperplane $CH^{n-1}$;

($A_2$) a tube over a totally geodesic $CH^p$ $0<p<n-1$;

(B) a tube over a totally real hyperbolic space $RH^n$.

\end{theorem}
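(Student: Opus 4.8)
The plan is to reduce the classification to the algebra of the principal curvatures together with a tube (focal-set) construction along the geodesics normal to $M$. First I would record two standard consequences of $M$ being Hopf. One is that $\alpha=\langle A\xi,\xi\rangle$ is locally constant, obtained by differentiating $A\xi=\alpha\xi$ and inserting the result into the Codazzi equation of $M\subset M_n(c)$. The other, from the same Codazzi equation, is the fundamental relation: if $X\in\Gamma(D)$ satisfies $AX=\lambda X$, then $\phi X$ is again principal, $A\phi X=\mu\phi X$, with
\[
\lambda\mu=\frac{\alpha}{2}(\lambda+\mu)+c .
\]
Because the principal curvatures are assumed constant, this quadratic constraint forces the spectrum of $A$ on $D$ into one of only finitely many configurations, each governed by $\alpha$ and by which pairs $(\lambda,\mu)$ of roots occur.

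The second step is to use the equivalence, valid for any real hypersurface in a complex space form, between being Hopf and being curvature-adapted. Indeed, the normal Jacobi operator $\bar R(\cdot,N)N$ of $M_n(c)$ equals $c(\mathrm{Id}+3\,\eta\otimes\xi)$, whose eigenspace splitting is exactly $\mathbb{R}\xi\oplus D$, so $A$ commutes with it precisely when $A\xi$ is a multiple of $\xi$. Curvature-adaptedness is what makes the normal-geodesic analysis tractable. Along a unit-speed geodesic $\gamma$ leaving $p\in M$ in the direction $N$, the shape operators of the parallel displaced hypersurfaces obey a Riccati equation driven by the ambient curvature operator $\bar R(\cdot,\dot\gamma)\dot\gamma$; since $A$ and this operator are simultaneously diagonalizable and the ambient curvature is parallel, the principal spaces are carried along $\gamma$ by parallel transport and each principal curvature satisfies a scalar Riccati ODE whose initial value is fixed by the constancy hypothesis.

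Integrating these scalar equations in $CH^n=SU(n,1)/S(U(n)\times U(1))$ produces explicit hyperbolic tangent and cotangent profiles at the two characteristic rates associated with the curvature eigenvalues $c$ and $4c$, together with a constant profile. A principal curvature that blows up at a finite distance $r$ marks a focal submanifold $P$ at the end of the tube, and the second fundamental form and induced structure of $P$ can be read off from the limiting behaviour of the eigenspaces. The admissible $P$ turn out to be a point (yielding the geodesic hypersphere, type $A_1$), a totally geodesic $CH^{n-1}$ (type $A_1$), a totally geodesic $CH^{p}$ with $0<p<n-1$ (type $A_2$), and a totally real totally geodesic $RH^n$ (type B); the remaining case, in which no principal curvature becomes singular at finite distance, is the constant-profile case and gives the horosphere, type $A_0$.

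I expect the principal obstacle to be the identification-and-rigidity step of the last paragraph: converting the algebraic eigenvalue data into a geometric description of $P$, confirming that each candidate is totally geodesic of the stated type, and excluding spurious configurations permitted by the relation above but incompatible with constant curvatures and with an embeddable focal set. This is precisely where one must invoke the special structure of $CH^n$—its isometry group $SU(n,1)$ and the classification of its totally geodesic submanifolds—to close the argument, whereas the purely local Codazzi and Riccati identities suffice for the earlier steps. Finally, since $CH^n$ is two-point homogeneous and each model hypersurface in the list is an orbit of a subgroup of $SU(n,1)$ acting with cohomogeneity one, $M$ must coincide with one of them up to an isometry of $CH^n$.
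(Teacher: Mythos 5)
This theorem is not proved in the paper at all: it is quoted from Berndt \cite{3} as background, so there is no internal argument to compare yours against; the only fair comparison is with Berndt's original proof. Your outline is in fact a faithful reconstruction of that proof's architecture: constancy of $\alpha$ for Hopf hypersurfaces, the relation $\lambda\mu=\frac{\alpha}{2}(\lambda+\mu)+c$ (which is the eigenvalue form of what appears in this paper as Lemma~\ref{lemma100}), the observation that Hopf is equivalent to curvature-adapted because the normal Jacobi operator is $c(\mathrm{Id}+3\eta\otimes\xi)$, the Riccati/Jacobi-field analysis along normal geodesics, identification of the focal set as a totally geodesic submanifold, and the classification of totally geodesic submanifolds of $CH^n$. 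So the approach is the right one and would work.

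Two places where your sketch papers over real work. First, the step ``if $AX=\lambda X$ then $A\phi X=\mu\phi X$'' follows from Lemma~\ref{lemma100} only when $2\lambda\neq\alpha$; the degenerate case $2\lambda=\alpha$ with $\alpha\lambda+2c=0$, i.e.\ $\alpha^2=-4c$, is impossible in $CP^n$ but genuinely occurs in $CH^n$ --- it is precisely where the horosphere and the critical-radius type-$B$ tube sit --- so the ``finitely many spectral configurations'' claim needs a separate argument there, and this is the technically delicate part of Berndt's paper. Second, the closing congruence step is not automatic from the models being cohomogeneity-one orbits: one must first show (via the limiting behaviour of the Riccati solutions) that the focal submanifold is totally geodesic, then invoke the classification of totally geodesic submanifolds of $CH^n$ and the transitivity of $SU(n,1)$ on such submanifolds of a fixed type, and finally argue that $M$ is an open subset of the tube of the appropriate radius. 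You correctly flag this as the principal obstacle, but as written it remains a plan rather than a proof.
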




In the following, by a real hypersurface of type $A$, we mean of type $A_1$, $A_2$ (resp. of $A_0$, $A_1$, $A_2$) for $c>0$ (resp. $c<0$). By using the above theorems, it is possible to obtain various classification theorems under conditions on certain symmetric operators, such as the shape operator, Ricci tensor, and the structure Jacobi operator.

The Jacobi operator $R_X$ with respect to a tangent vector field $X$ on an open subset of $M$, is defined by
$R_X(Y)=R(Y,X)X$, for $Y\in\Gamma(TM)$, where $R$ is the curvature tensor on $M$. 
In particular,
$R_\xi$ is called the structure Jacobi operator of $M$. Various conditions on the structure Jacobi operator have been considered for either characterizing certain classes of real hypersurfaces or non-existence problems. The study of real hypersurfaces with parallel $R_{\xi}$ was initiated in \cite{rocky}. In \cite{1}, a characterization of ruled real hypersurfaces in $M_n(c)$ under the condition $(\nabla_XR_\xi)\xi=0$ for all $X\in\Gamma(D)$ was given, and the non-existence of real hypersurfaces in $M_n(c)$ with $D$-parallel and $D$-recurrent $R_\xi$ can be deduced due to this result. In \cite{ki}, real hypersurfaces in $M_n(c)$ with cyclic-parallel $R_{\xi}$ have been studied; and in \cite{lie}, real hypersurfaces in $CP^n$ with Lie $\xi$-parallel $R_\xi$ were considered.

In \cite{ricci} and \cite{maeda}, the following conditions for the shape operator $A$ and the Ricci tensor $S$
\begin{eqnarray*}
(\nabla_XA)Y=-c\{\langle\phi X,Y\rangle\xi+\eta(Y)\phi X\},\nonumber\\
(\nabla_XS)Y=k\{\langle\phi X,Y\rangle\xi+\eta(Y)\phi X\}
\end{eqnarray*}
have been studied respectively for real hypersurfaces in $CP^n$. In \cite{nihonkai}, these two conditions were considered for real hypersurfaces in $CH^n$. 
The study of such conditions was motivated by the non-existence of real hypersurfaces in $M_n(c)$ with either parallel shape operator or parallel Ricci tensor (cf. page~243 and page~271 of \cite{Ryan}). On the other hand, there does not exist any real hypersurfaces in $M_n(c)$ with parallel $R_{\xi}$ as proved in \cite{rocky}.  It is natural to study a similar condition on $R_{\xi}$ in the following theorem.
\begin{theorem}\label{th3}
Let $M$ be a real hypersurface in $M_n(c)$. Then $M$ satisfies
\begin{equation}
(\nabla_XR_{\xi})Y=k(\langle\phi X,Y\rangle\xi+\eta(Y)\phi X),
\label{umb}
\end{equation}
for all $X,Y\in \Gamma(TM)$ if and only if $M$ is totally $\eta$-umbilical, i.e., it is locally congruent to one of the following real hypersurfaces:

for $c>0$,

(a). geodesic hyperspheres in $CP^n$;

for $c<0$,

(a). geodesic hyperspheres in $CH^n$;

(b). tubes around complex hyperbolic hyperplane in $CH^n$;

(c). horospheres in $CH^n$.

Furthermore, we have $k\neq 0$.
\end{theorem}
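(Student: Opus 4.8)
The plan is to translate the hypothesis into a statement about the shape operator $A$ and then argue in two stages: reduce to the Hopf case, and then show that a Hopf hypersurface satisfying \eqref{umb} is totally $\eta$-umbilical. Throughout I would use the structure equations $\nabla_X\xi=\phi AX$, the Gauss equation for the curvature tensor $R$ of $M$, and the Codazzi equation. From the Gauss equation the structure Jacobi operator is
\begin{equation*}
R_\xi Y=c\bigl(Y-\eta(Y)\xi\bigr)+\alpha AY-\langle A\xi,Y\rangle A\xi ,
\end{equation*}
and differentiating this, using $\nabla_X\xi=\phi AX$ and $(\nabla_XA)\xi=\nabla_X(A\xi)-A\phi AX$, produces a master identity for $(\nabla_XR_\xi)Y$ in terms of $\alpha$, $A\xi$, $\nabla_X(A\xi)$, $(\nabla_XA)Y$ and $\operatorname{grad}\alpha$. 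Equating this identity with the right-hand side of \eqref{umb} gives the single tensorial equation that drives the whole argument.

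Sufficiency is the easy direction. On a totally $\eta$-umbilical hypersurface $AX=\lambda X$ for $X\in\Gamma(D)$ and $A\xi=\alpha\xi$, with $\lambda$ (hence $\alpha$) constant, so $\phi AX=\lambda\phi X$ and the Hopf relation gives $\lambda^2=\alpha\lambda+c$. Substituting into the master identity, together with the fact that $(\nabla_XA)Y=-c(\langle\phi X,Y\rangle\xi+\eta(Y)\phi X)$ here, collapses everything to $(\nabla_XR_\xi)Y=-\lambda^3(\langle\phi X,Y\rangle\xi+\eta(Y)\phi X)$. Since $\lambda^2=\alpha\lambda+c$ and $c\neq0$ force $\lambda\neq0$, this yields \eqref{umb} with $k=-\lambda^3\neq0$. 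Combined with Theorems \ref{thcpn} and \ref{thchn}, which identify the totally $\eta$-umbilical (equivalently type $A_0$/$A_1$) hypersurfaces with exactly the list in the statement, this settles the $\Leftarrow$ direction and the claim $k\neq0$.

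For necessity, the first and main step is to prove that $M$ is Hopf. Setting $Y=\xi$ in \eqref{umb} and using $R_\xi\xi=0$ gives the key relation $R_\xi(\phi AX)=-k\phi X$ for all $X$. Writing $A\xi=\alpha\xi+\beta U$ with $U\in\Gamma(D)$ a unit field and $\beta\geq0$, I would assume $\beta\neq0$ on some open set and aim for a contradiction: taking $X=\xi$ already forces $\alpha\neq0$ and $A\phi U=-(c/\alpha)\phi U$, and then projecting $R_\xi(\phi AX)=-k\phi X$ and the full identity \eqref{umb} onto $\xi$, $U$, $\phi U$ and the orthogonal complement of $\{U,\phi U\}$ in $D$ should determine $AU$ and the behaviour of $\alpha,\beta$ tightly enough to contradict $\beta\neq0$. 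I expect this Hopf reduction to be the main obstacle, since it requires tracking the many components of a non-Hopf shape operator.

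Once $M$ is Hopf we have $A\xi=\alpha\xi$ with $\alpha$ constant and $\nabla_X(A\xi)=\alpha\phi AX$, so the master identity simplifies; restricting to $X,Y\in\Gamma(D)$ and extracting the $\xi$-component yields
\begin{equation*}
-\alpha\langle\phi AX,AY\rangle=k\langle\phi X,Y\rangle+c\langle\phi AX,Y\rangle .
\end{equation*}
Evaluating this on a principal pair $AX=\lambda X$, $A\phi X=\mu\phi X$ (with $Y=\phi X$), and then on the pair obtained by replacing $X$ with $\phi X$, gives $-\alpha\lambda\mu=k+c\lambda$ and $-\alpha\lambda\mu=k+c\mu$; subtracting and using $c\neq0$ forces $\lambda=\mu$, i.e. $A\phi=\phi A$ on $D$. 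The Hopf relation then reads $\lambda^2=\alpha\lambda+c$, whence $\lambda\neq0$ and $k=-\lambda^3$ at each point, so uniqueness of the real cube root makes all principal curvatures on $D$ coincide and $M$ is totally $\eta$-umbilical. The degenerate case $\alpha=0$ would be handled directly, the $\xi$-component equation there reducing to $k\phi X+c\phi AX=0$ on $D$, hence $A=-(k/c)I$ on $D$. Invoking Theorems \ref{thcpn} and \ref{thchn} then produces the stated list, and $k=-\lambda^3\neq0$ completes the argument.
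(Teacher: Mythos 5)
Your sufficiency argument and your treatment of the Hopf case are sound. The sufficiency direction coincides with the paper's Proposition~\ref{prop101}. Your Hopf-case argument is genuinely different from the paper's: the paper works with the weaker antisymmetrized condition (\ref{1}), obtains $(\alpha^2+c)\lambda+(\alpha c+k)=0$, and must then dispose of the exceptional branch $\alpha^2+c=0$, $c=-1$ by a separate argument (Proposition~\ref{105}); your route via the $\xi$-component identity $-\alpha\langle\phi AX,AY\rangle=k\langle\phi X,Y\rangle+c\langle\phi AX,Y\rangle$ evaluated on $X$ and on $\phi X$ forces $\lambda=\mu$ outright and so never meets that branch — a real simplification bought by using the full strength of (\ref{umb}). (Do note that your ``principal pair'' assumption needs the one-line justification that your own identity gives $\alpha\lambda\, A\phi X=-(k+c\lambda)\phi X$, so $\phi X$ is automatically principal when $\alpha\lambda\neq0$.)

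The genuine gap is the non-Hopf exclusion, which you yourself flag as ``the main obstacle'' and then leave at the level of ``projecting onto components should determine $AU$ and the behaviour of $\alpha,\beta$ tightly enough to contradict $\beta\neq0$.'' This expectation is not borne out: the componentwise algebra does \emph{not} by itself contradict $\beta\neq0$. In the paper (Proposition~\ref{102}) the analogous computations yield $\alpha\neq0$, $A\phi U=-(c/\alpha)\phi U$, $AU=\beta\xi+(2k/c+(\beta^2-c)/\alpha)U$, and $AZ=\lambda Z$ on $D_U$ with $\lambda^2=c$, hence $c=1$, $\lambda=\pm1$, and then (via the Codazzi-equation Lemma~\ref{lemma1}) $\alpha=-\lambda$ and $k=0$ — a perfectly consistent algebraic configuration for the shape operator. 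Ruling it out requires a second-order, differential argument: the paper's Lemma~\ref{lemma2} computes $R(U,\phi U)U$ twice, once from the Gauss equation and once from the connection coefficients, and the mismatch forces $\beta$ to be constant, contradicting $\phi U\beta=-2\beta^2-1$. (Alternatively, since this configuration has $k=0$, condition (\ref{umb}) degenerates to $\nabla R_\xi=0$ and you could instead cite the non-existence of real hypersurfaces with parallel structure Jacobi operator \cite{rocky}; but some such additional input is indispensable.) As written, your proposal omits the hardest part of the proof.
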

Let $F$ be a tensor field of type $(1,1)$ on $M$. $F$ is said to be of Codazzi type if for any $X,Y\in\Gamma(TM)$,
\begin{equation}
\label{40}
(\nabla_XF)Y=(\nabla_YF)X.
\end{equation}
The condition (\ref{40}) is weaker than the parallelism of $F$, and it is natural since for a totally geodesic hypersurface in a Riemannian manifold (if it exists), the shape operator is of Codazzi type. A Riemannian manifold is said to have harmonic curvature if its Ricci tensor is of Codazzi type. This cannot happen for a Hopf hypersurface in $M_n(c)$ (cf. \cite[page 279]{Ryan}). Moreover, in \cite{codazzi}, the non-existence of real hypersurfaces in $CP^n$ with Codazzi type structure Jacobi operator has been obtained. We will generalize this statement in Theorem~\ref{th2}.


\begin{theorem}\label{th2}
 There does not exist any real hypersurface $M$ in $M_n(c)$ with its structure Jacobi operator of Codazzi type.
\end{theorem}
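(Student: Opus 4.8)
The plan is to convert the pointwise Codazzi identity into operator identities, use them to force $M$ to be Hopf, and then contradict the eigenvalue structure of Hopf hypersurfaces. First I would record the structure Jacobi operator: by the Gauss equation, using $\phi\xi=0$,
\[
R_\xi Y=c\{Y-\eta(Y)\xi\}+\alpha AY-\langle A\xi,Y\rangle A\xi .
\]
Differentiating covariantly with the standard formulas $\nabla_X\xi=\phi AX$, $(\nabla_X\phi)Y=\eta(Y)AX-\langle AX,Y\rangle\xi$, and the Codazzi equation $(\nabla_XA)Y-(\nabla_YA)X=c\{\eta(X)\phi Y-\eta(Y)\phi X-2\langle\phi X,Y\rangle\xi\}$ gives an explicit expression for $(\nabla_XR_\xi)Y$. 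The hypothesis is $(\nabla_XR_\xi)Y=(\nabla_YR_\xi)X$ for all $X,Y\in\Gamma(TM)$.

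The first reduction is to take the $\xi$-component. Since $R_\xi\xi=0$ and hence $\langle R_\xi Y,\xi\rangle=\langle Y,R_\xi\xi\rangle=0$ identically, a short computation gives $\langle(\nabla_XR_\xi)Y,\xi\rangle=-\langle R_\xi Y,\phi AX\rangle$. The Codazzi type condition therefore forces $\langle R_\xi\phi AX,Y\rangle=\langle R_\xi\phi AY,X\rangle$, i.e. $R_\xi\phi A$ is self-adjoint. As $R_\xi$ and $A$ are self-adjoint and $\phi$ is skew-adjoint, this is equivalent to the operator identity
\[
R_\xi\phi A+A\phi R_\xi=0 .
\]

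Next I would prove $M$ is Hopf. Write $A\xi=\alpha\xi+\beta U$ with $U\in\Gamma(D)$ a unit field and $\beta\geq0$, and suppose $\beta\neq0$ on some open set. Evaluating the identity above at $\xi$ and using $R_\xi\xi=0$ gives $\beta R_\xi\phi U=0$, hence $R_\xi\phi U=0$; since $\phi U\in D$ and $\langle A\xi,\phi U\rangle=0$ this reads $c\phi U+\alpha A\phi U=0$, which already forces $\alpha\neq0$ (otherwise $c=0$) and $A\phi U=-\tfrac{c}{\alpha}\phi U$. I expect the main obstacle to be completing this argument: one sets up a frame adapted to $\{\xi,U,\phi U\}$, evaluates both the operator identity and the remaining $D$-valued part of the Codazzi identity on $U$, $\phi U$ and vectors orthogonal to them, and combines the resulting algebraic relations among $\alpha$, $\beta$ and the eigendata of $A$ to conclude $\beta=0$. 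This bookkeeping is the technical heart of the theorem.

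Once $M$ is Hopf, $\alpha$ is constant (a classical fact for Hopf hypersurfaces in $M_n(c)$) and $R_\xi=cI+\alpha A-(c+\alpha^2)\eta\otimes\xi$. If $c+\alpha^2\neq0$, substituting $Y=\xi$ and $X\in\Gamma(D)$ into the hypothesis, together with $(\nabla_\xi A)X=(\nabla_XA)\xi+c\phi X$, yields $(c+\alpha^2)\phi AX+\alpha c\phi X=0$; applied to a principal vector $X\in D$ with $AX=\lambda X$ this gives $(c+\alpha^2)\lambda+\alpha c=0$, so $A|_D=\tfrac{-\alpha c}{c+\alpha^2}I$ and $M$ is totally $\eta$-umbilical. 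But the Hopf identity $A\phi A=\tfrac{\alpha}{2}(A\phi+\phi A)+c\phi$ then forces $\lambda^2=\alpha\lambda+c$, and substituting $\lambda=\tfrac{-\alpha c}{c+\alpha^2}$ gives $c=0$, a contradiction. If instead $c+\alpha^2=0$ (so $c<0$ and $\alpha\neq0$), then $R_\xi=cI+\alpha A$ and the hypothesis reduces to $(\nabla_XA)Y=(\nabla_YA)X$; the Codazzi equation with $X\in D$ and $Y=\phi X$ gives $(\nabla_XA)Y-(\nabla_YA)X=-2c\langle\phi X,\phi X\rangle\xi\neq0$, again a contradiction. Hence no such $M$ exists.
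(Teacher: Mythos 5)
Your reduction to the operator identity $R_\xi\phi A+A\phi R_\xi=0$ is correct (it is equation (\ref{9}) of the paper with $k=0$), and your treatment of the Hopf case is complete and sound: both subcases $c+\alpha^2\neq0$ (total $\eta$-umbilicity contradicting the Hopf relation $\lambda^2=\alpha\lambda+c$ from Lemma~\ref{lemma100}) and $c+\alpha^2=0$ (where $\nabla R_\xi=\alpha\nabla A$ and the Codazzi equation yields a nonzero antisymmetric part) check out. This part is in fact somewhat more direct than the paper's route, which obtains the Hopf case from Propositions~\ref{prop101}, \ref{105} and \ref{103} for the whole one-parameter family of conditions (\ref{1}) and then specializes to $k=0$.

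The genuine gap is the non-Hopf case, which you defer as ``bookkeeping.'' It is not bookkeeping, and the pointwise algebraic relations you propose to combine do not conclude $\beta=0$. Carrying them out (as the paper does in Proposition~\ref{102}) one finds that every algebraic consequence of the hypothesis is \emph{consistent} with a non-Hopf shape operator of the form $c=1$, $A\xi=\alpha\xi+\beta U$, $AU=\beta\xi+\alpha(\beta^2-1)U$, $A\phi U=-\alpha\phi U$, $AX=-\alpha X$ on $D_U$, with $\alpha=\pm1$; moreover this surviving configuration occurs exactly when $k=0$, i.e.\ exactly in the Codazzi-type case you are trying to rule out. Eliminating it requires a second-order integrability argument, the paper's Lemma~\ref{lemma2}: one extracts the connection coefficients $\nabla_UU$, $\nabla_{\phi U}U$, $\nabla_{\phi U}\phi U$, $\nabla_\xi U$, $\nabla_{\phi U}\xi$ together with the derivative $\phi U\beta=-2\beta^2-1$ from the Codazzi equation, then computes $R(U,\phi U)U$ once by the Gauss equation and once from $\nabla_U\nabla_{\phi U}U-\nabla_{\phi U}\nabla_UU-\nabla_{[U,\phi U]}U$; the two answers, $(\beta^2-5)\phi U$ and $(10\beta^2-8)\phi U$, force $\beta$ to be constant, contradicting $\phi U\beta=-2\beta^2-1$. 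Without this step (or an equivalent one) your argument does not close, and since this is where the difficulty of the theorem is concentrated, the proposal as written does not establish the statement.
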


In order to prove Theorem~\ref{th3} and Theorem~\ref{th2} simultaneously, we consider a generalized condition in Theorem~\ref{th0}.

\begin{theorem}
\label{th0}
A real hypersurface $M$ in $M_n(c)$ satisfies
\begin{align}
\langle(\nabla_XR_{\xi})Y-(\nabla_YR_{\xi})X,W\rangle
=&k(2\eta(W)\langle\phi X,Y\rangle+\eta(Y)\langle\phi X,W\rangle  \nonumber\\
 & -\eta(X)\langle\phi Y,W\rangle)
\label{1}
\end{align}
for all $X,Y,W\in\Gamma(TM)$,
if and only if $M$ is a totally $\eta$-umbilical real hypersurface, or an arbitrary Hopf hypersurface with $\alpha=k=1$, $c=-1$. Furthermore, we have $k\neq 0$.
\end{theorem}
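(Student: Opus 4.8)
The plan is to turn the tensorial identity (\ref{1}) into pointwise relations between the principal curvatures. From the Gauss equation one obtains the explicit form of the structure Jacobi operator,
\[ R_\xi Y=c\big(Y-\eta(Y)\xi\big)+\alpha AY-\langle Y,A\xi\rangle A\xi, \]
in particular $R_\xi\xi=0$. Recording the standard formulas $\nabla_X\xi=\phi AX$, $(\nabla_X\phi)Y=\eta(Y)AX-\langle AX,Y\rangle\xi$ and the Codazzi equation $(\nabla_XA)Y-(\nabla_YA)X=c\big(\eta(X)\phi Y-\eta(Y)\phi X-2\langle\phi X,Y\rangle\xi\big)$, I would write $(\nabla_XR_\xi)Y$ in closed form; differentiating $R_\xi\xi=0$ also gives the handy identity $(\nabla_XR_\xi)\xi=-R_\xi(\phi AX)$.

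The first and hardest step is to prove that $M$ is Hopf. Put $A\xi=\alpha\xi+\beta U$ with $U\in\Gamma(D)$ a unit field and $\beta\ge0$. Taking $Y=W=\xi$ in (\ref{1}) and using $(\nabla_XR_\xi)\xi=-R_\xi(\phi AX)$ together with $\nabla_\xi\xi=\beta\phi U$, the left-hand side collapses to $\beta\langle X,R_\xi(\phi U)\rangle$ while the right-hand side vanishes, so on the set where $\beta\neq0$ we get $R_\xi(\phi U)=c\,\phi U+\alpha A\phi U=0$, hence $\alpha\neq0$ and $A\phi U=-(c/\alpha)\phi U$. The remaining work is to feed the general expression for $(\nabla_XR_\xi)Y-(\nabla_YR_\xi)X$ into (\ref{1}) for $X,Y,W$ chosen among $\xi$, $U$, $\phi U$ and vectors of $D$ orthogonal to these, and to show the resulting system---now also involving $\nabla_XU$, $X\alpha$ and $X\beta$, since $\alpha$ need no longer be constant---is inconsistent unless $\beta\equiv0$. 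I expect this elimination of the non-Hopf terms to be the main obstacle.

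Once $M$ is Hopf, $\alpha$ is constant and $A$ preserves $D$, and the Codazzi equation reduces the antisymmetrised derivative to
\[ (\nabla_XR_\xi)Y-(\nabla_YR_\xi)X=\alpha c\big(\eta(X)\phi Y-\eta(Y)\phi X-2\langle\phi X,Y\rangle\xi\big)-(c+\alpha^2)\big(\langle(\phi A+A\phi)X,Y\rangle\xi+\eta(Y)\phi AX-\eta(X)\phi AY\big). \]
The decisive test is $Y=\xi$ with $X,W\in\Gamma(D)$, which turns (\ref{1}) into the operator identity $(c+\alpha^2)\phi A=-(\alpha c+k)\phi$ on $D$. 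If $c+\alpha^2\neq0$, composing with $\phi$ and using $\phi^2=-\mathrm{id}$ on $D$ gives $AX=\lambda X$ for all $X\in\Gamma(D)$ with $\lambda=-(\alpha c+k)/(c+\alpha^2)$, so $M$ is totally $\eta$-umbilical. If $c+\alpha^2=0$, then $c=-1$, $\alpha^2=1$ and $\alpha c+k=0$; up to reversal of the unit normal (which sends $(\alpha,k)$ to $(-\alpha,-k)$) this is exactly the family of arbitrary Hopf hypersurfaces with $\alpha=k=1$, $c=-1$. This yields the stated dichotomy, and Theorem~\ref{thcpn} and Theorem~\ref{thchn} identify the totally $\eta$-umbilical hypersurfaces (which have constant principal curvatures) as the type-$A_1$/$A_0$ examples.

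For the converse I would substitute each case back into the displayed Hopf formula. For a totally $\eta$-umbilical hypersurface $\phi A=A\phi=\lambda\phi$, and using the relation $\lambda^2-\alpha\lambda-c=0$ valid for type $A$, the formula collapses precisely to the right-hand side of (\ref{1}) with $k=-\lambda^3$; when $c+\alpha^2=0$ the entire $(c+\alpha^2)$-term vanishes and the surviving $\alpha c$-term is the right-hand side of (\ref{1}) with $k=1$. Finally, $k\neq0$: in the first case $k=-\lambda^3$ with $\lambda\neq0$ (otherwise $\lambda^2-\alpha\lambda-c=0$ would force $c=0$), while in the second $k=1$; equivalently, $k=0$ would make $R_\xi$ of Codazzi type, which is exactly what the analysis above excludes.
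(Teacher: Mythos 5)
Your Hopf-case analysis and the converse direction are correct and essentially reproduce the paper's Propositions 6, 7 and 9: the reduction of \eqref{1} (with $Y=\xi$, $X,W\in\Gamma(D)$) to $(c+\alpha^2)\phi A=-(\alpha c+k)\phi$ on $D$, the resulting dichotomy, and the computation $k=-\lambda^3\neq 0$ all match the paper (your displayed formula for $(\nabla_XR_\xi)Y-(\nabla_YR_\xi)X$ implicitly uses the identity $2A\phi A=\alpha(A\phi+\phi A)+2c\phi$, which is the paper's Lemma~\ref{lemma100} and should be cited). Your opening move in the non-Hopf case --- taking $Y=W=\xi$ to obtain $R_\xi\phi U=0$, hence $\alpha\neq 0$ and $A\phi U=-(c/\alpha)\phi U$ --- is also exactly how the paper begins.

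The genuine gap is that the non-Hopf case is not actually proved: you state the plan ("feed the general expression \dots into \eqref{1} \dots and show the resulting system is inconsistent unless $\beta\equiv 0$") and explicitly defer it as "the main obstacle," but this is where essentially all of the difficulty of the theorem lives, and the expected contradiction does not arise from the linear elimination you describe. In the paper (Proposition~\ref{102}), the substitutions you propose do \emph{not} produce an inconsistency by themselves; they instead completely determine the shape operator: one finds $AU=\beta\xi+(2k/c+(\beta^2-c)/\alpha)U$, that $D_U$ is $A$-invariant with $Y\alpha=0$ and $AY=\lambda Y$ on $D_U$, and then $\lambda^2=c$ forces $c=1$, $\lambda=\pm 1$; an auxiliary Codazzi-equation identity (Lemma~\ref{lemma1}) then pins down $\alpha=-\lambda$ and $k=0$. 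At that point the data are still internally consistent at the level of first-order identities, and the contradiction requires a separate, genuinely second-order argument (Lemma~\ref{lemma2}): computing $R(U,\phi U)U$ once from the Gauss equation and once from the Levi-Civita connection built out of the derived covariant derivatives of $\xi,U,\phi U$, which forces $\beta$ to be constant, contradicting $\phi U\beta=-2\beta^2-1$. Without this (or an equivalent) argument, the non-existence of non-Hopf solutions --- and hence the "only if" direction of the theorem --- remains unestablished.
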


 Finally, Theorem~\ref{th3} and Theorem~\ref{th0} give two equivalent characterizations for totally $\eta$-umbilical real hypersurfaces in $CP^n$, as stated in the following corollary.

\begin{corollary}
For a real hypersurface $M$ in $CP^n$, the following conditions are equivalent:

(a). (\ref{umb}) holds for all $X,Y,W\in\Gamma(TM)$;

(b). (\ref{1}) holds for all $X,Y\in\Gamma(TM)$;

(c). $M$ is totally $\eta$-umbilical.
\end{corollary}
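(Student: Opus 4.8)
The plan is to deduce the corollary directly from Theorems~\ref{th3} and~\ref{th0} by closing the cycle of implications $\text{(c)}\Rightarrow\text{(a)}\Rightarrow\text{(b)}\Rightarrow\text{(c)}$. No new geometric work is needed, since the classification is already carried by those two theorems; the only genuine point is to track the sign of $c$ when specializing to $CP^n$.

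For $\text{(c)}\Rightarrow\text{(a)}$ I would invoke the ``if'' direction of Theorem~\ref{th3}: a totally $\eta$-umbilical hypersurface satisfies (\ref{umb}). Since $M\subset CP^n$ means $c>0$, the relevant totally $\eta$-umbilical hypersurfaces are exactly the geodesic hyperspheres, so this step is immediate. The implication $\text{(a)}\Rightarrow\text{(b)}$ is the only computation and uses no classification input. Assuming (\ref{umb}), I antisymmetrize in $X,Y$ and pair with $W$:
\[
\langle(\nabla_XR_{\xi})Y-(\nabla_YR_{\xi})X,\,W\rangle
= k\langle\langle\phi X,Y\rangle\xi+\eta(Y)\phi X-\langle\phi Y,X\rangle\xi-\eta(X)\phi Y,\,W\rangle.
\]
Using the skew-symmetry $\langle\phi Y,X\rangle=-\langle\phi X,Y\rangle$, the two $\xi$-terms add to $2\eta(W)\langle\phi X,Y\rangle$, and the expression collapses to the right-hand side of (\ref{1}) with the same constant $k$. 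Hence (\ref{umb}) implies (\ref{1}).

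The decisive step is $\text{(b)}\Rightarrow\text{(c)}$, where the hypothesis that the ambient space is $CP^n$ (equivalently $c=1>0$) is essential. By Theorem~\ref{th0}, condition (\ref{1}) forces $M$ to be either totally $\eta$-umbilical or an arbitrary Hopf hypersurface with $\alpha=k=1$ and $c=-1$. The second alternative requires a complex hyperbolic ambient space and therefore cannot occur in $CP^n$; thus (\ref{1}) leaves only the totally $\eta$-umbilical possibility. I do not expect a genuine obstacle here: once one observes that the exceptional family of Theorem~\ref{th0} lives solely in $CH^n$, the cycle closes and all three conditions are equivalent. (One could equally phrase this as the two biconditionals $\text{(a)}\Leftrightarrow\text{(c)}$ and $\text{(b)}\Leftrightarrow\text{(c)}$ read off from the two theorems, but the cyclic route makes transparent why (\ref{umb}) is genuinely stronger than (\ref{1}).)
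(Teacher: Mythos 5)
Your proposal is correct and matches the paper's intended argument: the paper derives the corollary by reading off (a)$\Leftrightarrow$(c) from Theorem~\ref{th3} and (b)$\Leftrightarrow$(c) from Theorem~\ref{th0}, with the exceptional case of Theorem~\ref{th0} excluded because it requires $c=-1$, and your antisymmetrization step (a)$\Rightarrow$(b) is exactly the observation the paper itself uses in the proof of Theorem~\ref{th3}. Organizing it as a cycle rather than two biconditionals is a cosmetic difference only.
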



\section{Preliminaries}
 Let $M$ be a connected real hypersurface in
$M_n(c)$ without boundary. The set of tensors $(\phi, \xi, \eta, \langle ,
\rangle)$ is an almost contact metric structure on $M$, i.e., they
satisfy the following
\begin{equation*}
    \phi ^{2}X = -X + \eta (X) \xi,
    \quad \phi \xi = 0,
    \quad \eta (\phi X) = 0,
    \quad \eta (\xi ) = 1.
\end{equation*}
From the parallelism of $J$, we get
\begin{equation*}
    ({\nabla}_X{\phi})Y=\eta(Y)AX-{\langle}AX,Y\rangle\xi,
\end{equation*}
\begin{equation*}
 \nabla_X\xi={\phi}AX.
\end{equation*}
\noindent Let $R$ be the curvature tensor of $M$. Then the Gauss and
Codazzi equations are respectively given by
\begin{eqnarray*}
R(X,Y)Z=c \{\langle Y,Z \rangle X- \langle X,Z \rangle\ Y+ \langle \phi Y,Z \rangle \phi X-\langle \phi X,Z \rangle \phi Y  \\
         - 2\langle \phi X,Y \rangle \phi Z \} + \langle AY,Z \rangle AX-
                    \langle AX,Z \rangle AY ,
\end{eqnarray*}
\noindent
\begin{eqnarray*}
({\nabla_X}A)Y-({\nabla_Y}A)X=c\{\eta(X){\phi}Y-\eta(Y){\phi}X-2\langle{\phi}X,Y\rangle\xi\}.
\end{eqnarray*}

From the Gauss equation, we have
\begin{equation}
R_{\xi}Y=c\{Y-\eta(Y)\xi\}+{\alpha}AY-\eta(AY)A\xi,
 \label{g1}
\end{equation}
\begin{eqnarray}
(\nabla_XR_{\xi})Y=-c{\langle}Y,{\phi}AX\rangle\xi-c{\eta}(Y){\phi}AX+(X\alpha)AY+\alpha({\nabla_X}A)Y
\nonumber \\
 -\langle(\nabla_XA)Y,\xi{\rangle}A\xi-{\langle}Y,A{\phi}AX{\rangle}A\xi
 \nonumber \\
 -\eta(AY)(\nabla_XA)\xi-\eta(AY)A{\phi}AX,
 \label{g2}
\end{eqnarray}
for any $X,Y\in\Gamma(TM)$.

For a Hopf hypersurface $M$ in $M_n(c)$, it can be proved that $\alpha$ is a constant (cf. \cite{jdg}). The following cited results will be used in the proof of our theorems.

\begin{theorem}(
\cite[page 262-264]{Ryan})
\label{lemma3}
Let $M$ be a real hypersurface in $M_n(c)$. Then $M$ is locally congruent to a Hopf hypersurface of type $A$ if and only if
\[
(\nabla_XA)Y=-c\{\langle\phi X,Y\rangle\xi+\eta(Y)\phi X\}
\]
for any $X,Y\in\Gamma(TM)$.
\end{theorem}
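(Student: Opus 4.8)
The plan is to prove both directions by expanding the antisymmetrised derivative $T(X,Y):=(\nabla_XR_\xi)Y-(\nabla_YR_\xi)X$ from (\ref{g2}) and comparing it with the right-hand side of (\ref{1}).

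For sufficiency both candidate classes are Hopf, so I would specialise (\ref{g2}) accordingly. Using $A\xi=\alpha\xi$ (hence $X\alpha=0$ and $AD\subseteq D$), the Codazzi equation, the standard Hopf identity $2A\phi A=\alpha(A\phi+\phi A)+2c\phi$ and its consequence $(\nabla_XA)\xi=\tfrac{\alpha}{2}(\phi A-A\phi)X-c\phi X$, a term-by-term antisymmetrisation collapses $\langle T(X,Y),W\rangle$ to
\begin{align*}
\langle T(X,Y),W\rangle={}&-\big[(c+\alpha^2)\langle(A\phi+\phi A)X,Y\rangle+2\alpha c\langle\phi X,Y\rangle\big]\eta(W)\\
&+(c+\alpha^2)\big(\eta(X)\langle\phi AY,W\rangle-\eta(Y)\langle\phi AX,W\rangle\big)\\
&+\alpha c\big(\eta(X)\langle\phi Y,W\rangle-\eta(Y)\langle\phi X,W\rangle\big).
\end{align*}
If $M$ is totally $\eta$-umbilical then $A=\lambda\,\mathrm{Id}$ on $D$, so $A\phi+\phi A=2\lambda\phi$ there and $\phi AY=\lambda\phi Y$; the display matches (\ref{1}) with $k=-[(c+\alpha^2)\lambda+\alpha c]$, which the Hopf relation $\lambda\cdot\lambda=\tfrac{\alpha}{2}(\lambda+\lambda)+c$, i.e. $\lambda^2=\alpha\lambda+c$, reduces to $k=-\lambda^3$. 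If instead $c=-1$ and $\alpha=1$, then $c+\alpha^2=0$: the terms carrying that factor vanish and the survivors $-2\alpha c\langle\phi X,Y\rangle\eta(W)+\alpha c(\eta(X)\langle\phi Y,W\rangle-\eta(Y)\langle\phi X,W\rangle)$ reproduce (\ref{1}) with $k=1$ for \emph{any} Hopf hypersurface; the choice $\alpha=-1$ is carried to $\alpha=1$ by replacing $N$ with $-N$.

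For necessity, the crux is to force $M$ to be Hopf. I would write $A\xi=\alpha\xi+\beta U$ with $U\in\Gamma(D)$ a unit field on the open set where $\beta>0$, assume such a set is nonempty, and substitute vectors from the adapted set $\{\xi,U,\phi U\}$ and from $D\cap U^\perp$ into (\ref{1}). The expansion is streamlined by $(\nabla_XR_\xi)\xi=-R_\xi\phi AX$, a consequence of $R_\xi\xi=0$, which together with (\ref{g1}) gives $(\nabla_YR_\xi)\xi=-c\phi AY-\alpha A\phi AY+\eta(A\phi AY)A\xi$. The non-Hopf expansion of $\langle T(X,Y),W\rangle$ produces extra terms quadratic in $\beta$, typified by $\beta^2(\langle Y,U\rangle\langle\phi AX,U\rangle-\langle X,U\rangle\langle\phi AY,U\rangle)$ in the $W=\xi$ specialisation, and I expect the $W=\xi$ relation together with the $W\in D$ relations to be incompatible unless $\beta\equiv0$. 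I anticipate this being the main obstacle: without $A\xi=\alpha\xi$ the formula (\ref{g2}) is far less tractable, $\alpha$ need not be constant, and the whole shape operator, not merely $A|_D$, must be tracked.

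Once $M$ is Hopf I would return to the displayed closed form. Putting $Y=\xi$ and $X,W\in\Gamma(D)$ in (\ref{1}) yields $(c+\alpha^2)\phi AX=-(\alpha c+k)\phi X$ on $D$. If $c+\alpha^2\neq0$, applying $\phi$ and using $AD\subseteq D$ gives $AX=\lambda X$ on $D$ with $\lambda=-(\alpha c+k)/(c+\alpha^2)$ constant, so $M$ is totally $\eta$-umbilical and Theorems~\ref{thcpn} and \ref{thchn} identify it with the listed examples. If $c+\alpha^2=0$, then $c=-1$, $\alpha=\pm1$, and the same substitution forces $\alpha c+k=0$, i.e. $k=\alpha$, with no constraint on $A|_D$; after $N\mapsto-N$ this is the exceptional family $\alpha=k=1$, $c=-1$. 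Finally $k\neq0$: in the umbilical case $k=-\lambda^3$ while $\lambda^2=\alpha\lambda+c$ with $c=\pm1$ rules out $\lambda=0$, and in the exceptional case $k=1$.
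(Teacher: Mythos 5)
There is a genuine gap, and it is global: you have written a proof sketch of Theorem~\ref{th0}, not of the statement at hand. The statement (Theorem~\ref{lemma3}) is a condition on the covariant derivative of the \emph{shape operator}, namely $(\nabla_XA)Y=-c\{\langle\phi X,Y\rangle\xi+\eta(Y)\phi X\}$, characterizing Hopf hypersurfaces of \emph{type $A$}; your entire argument instead analyzes the antisymmetrized derivative of the \emph{structure Jacobi operator} $R_\xi$ and the condition (\ref{1}), and lands on the dichotomy ``totally $\eta$-umbilical, or Hopf with $\alpha=k=1$, $c=-1$.'' That conclusion cannot be a proof of Theorem~\ref{lemma3} even in outline: the type-$A$ class includes the $A_2$ tubes over totally geodesic $CP^p$ and $CH^p$ with $0<p<n-1$, which have two distinct principal curvatures on $D$ and are \emph{not} totally $\eta$-umbilical, so the two classifications genuinely differ. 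Nothing in your text engages the actual hypothesis $(\nabla_XA)Y=-c\{\langle\phi X,Y\rangle\xi+\eta(Y)\phi X\}$ or the type-$A$ models. (Note also that in the paper this theorem carries no proof at all --- it is quoted from Niebergall--Ryan; the standard route there is to show the displayed identity is equivalent to $\xi$ being principal together with $\phi A=A\phi$, e.g.\ by setting $Y=\xi$ to get $(\nabla_XA)\xi=-c\phi X$ and combining with the Codazzi equation, and then to invoke the Okumura and Montiel--Romero characterizations of type-$A$ hypersurfaces; the converse is a direct verification on the type-$A$ models using their known constant principal curvatures.)

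A secondary point: even if your text were resubmitted as a proof of Theorem~\ref{th0}, its crucial step is missing. The exclusion of non-Hopf hypersurfaces is only announced (``I expect \dots I anticipate this being the main obstacle''), whereas the paper must work for it: Proposition~\ref{102} pins down $A\phi U$, $AU$ and $A|_{D_U}$ from carefully chosen substitutions in (\ref{2}), then feeds the resulting data $\alpha=-\lambda$, $k=0$, $\lambda^2=c$ into Lemma~\ref{lemma1} and finally derives a contradiction from the curvature computation of Lemma~\ref{lemma2}. Your sufficiency computation (the closed form of $\langle T(X,Y),W\rangle$ for Hopf hypersurfaces, with $k=-\lambda^3$ in the umbilical case and $k=1$ in the exceptional case) is consistent with Propositions~\ref{prop101} and~\ref{105}, but it is answering a different question than the one posed.
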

\begin{theorem}
(\cite{focal}, \cite{montiel})
\label{classifyumb}
Let $M$ be a real hypersurface in $M_n(c)$. Then $M$ is totally $\eta$-umbilical if and only if it is one of the following:

for $c>0$,

(a). geodesic hyperspheres in $CP^n$;

for $c<0$,

(a). geodesic hyperspheres in $CH^n$;

(b). tubes around complex hyperbolic hyperplane in $CH^n$;

(c). horospheres in $CH^n$.
\end{theorem}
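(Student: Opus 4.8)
The plan is to prove the two implications separately, using the deep classifications in Theorems~\ref{thcpn} and~\ref{thchn} (together with Theorem~\ref{lemma3}) as the main external input, and reducing the statement to an elementary analysis of principal curvatures. \emph{First I would observe that a totally $\eta$-umbilical hypersurface is automatically Hopf.} Suppose $AX=\lambda X$ for all $X\in\Gamma(D)$. For any such $X$, symmetry of $A$ gives $\langle A\xi,X\rangle=\langle\xi,AX\rangle=\lambda\langle\xi,X\rangle=0$, so $A\xi$ has no component in $D$; hence $A\xi=\alpha\xi$ with $\alpha=\langle A\xi,\xi\rangle$. Consequently the shape operator has the closed form $AY=\lambda Y+(\alpha-\lambda)\eta(Y)\xi$, and $M$ carries at most the two principal curvatures $\alpha$ (along $\xi$) and $\lambda$ (on all of $D$).

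\emph{Next I would show the curvatures are constant and tied by a quadratic relation.} Since $M$ is Hopf, $\alpha$ is constant (the cited fact in the Preliminaries). Feeding the Hopf condition into the Codazzi equation yields the standard identity $2A\phi AX=\alpha(A\phi+\phi A)X+2c\,\phi X$ for $X\in\Gamma(D)$; applied to a unit principal $X$ with $AX=\lambda X$ and using $A\phi X=\lambda\phi X$ (as $\phi X\in D$), this reduces to $\lambda^2-\alpha\lambda-c=0$, i.e.
\[
\lambda(\alpha-\lambda)=-c.
\]
Since $\alpha$ and $c$ are constant, $\lambda$ is a root of a fixed quadratic; being continuous on the connected manifold $M$ and valued in a two-point set, it is constant. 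Thus $M$ is a Hopf hypersurface with constant principal curvatures. (Note $\lambda\neq\alpha$, since $\lambda=\alpha$ would force $c=0$; so no totally umbilical case arises.)

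\emph{For the forward direction} I would now invoke the classification. One clean route is to insert $\lambda(\alpha-\lambda)=-c$ into a direct computation of $(\nabla_XA)Y$ from the explicit form of $A$ and from $\nabla_X\xi=\phi AX$; this produces exactly $(\nabla_XA)Y=-c\{\langle\phi X,Y\rangle\xi+\eta(Y)\phi X\}$, so by Theorem~\ref{lemma3} the hypersurface is of type $A$. Alternatively one applies Theorem~\ref{thcpn} (for $c>0$) or Theorem~\ref{thchn} (for $c<0$) directly, $M$ being Hopf with constant principal curvatures. It then remains to single out, from the explicit list, the models whose holomorphic distribution $D$ is a \emph{single} eigenspace of $A$. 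This is where the principal-curvature bookkeeping enters: for each model one reads off the eigenvalues of $A$ on $D$ and their multiplicities. The types $A_2$ (tubes over $CP^p$ or $CH^p$, $0<p<n-1$), $B$, $C$, $D$, $E$ each carry at least two distinct principal curvatures on $D$ with nonzero multiplicities, hence cannot be totally $\eta$-umbilical; only the type $A_1$ in $CP^n$, and the types $A_0,A_1$ in $CH^n$, satisfy $A|_D=\lambda\,\mathrm{id}$. Recalling (via the Remark) that type $A_1$ in $CP^n$ is precisely the geodesic hyperspheres, and that $A_0,A_1$ in $CH^n$ are the horospheres, geodesic hyperspheres, and tubes over a complex hyperbolic hyperplane, this yields exactly the stated list.

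\emph{For the converse, and the main obstacle:} I would verify directly that each listed model is totally $\eta$-umbilical, by computing the shape operator of a geodesic hypersphere, of a tube of radius $r$ over a totally geodesic complex hyperplane, and of a horosphere, via Jacobi-field (tube) formulas along the normal geodesics; in each case $\xi$ is principal and $A$ acts on $D$ as a single scalar ($\cot r$, $\coth r$, $\tanh r$, or the horospherical value), confirming totally $\eta$-umbilicity. The genuine difficulty of the whole argument is exactly this eigenvalue analysis: determining, for every hypersurface in the classification lists, the precise eigenvalues of $A$ on $D$ and showing that they collapse to a single value precisely for the claimed models. This step is routine in spirit but must be carried out model by model, and it is the only place that uses the fine tube geometry rather than the abstract structure and Codazzi equations.
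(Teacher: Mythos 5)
This statement is one the paper does not prove at all: it is imported verbatim from Cecil--Ryan \cite{focal} and Montiel \cite{montiel}, so there is no internal proof to compare against, and your blind attempt should be judged as a free-standing argument. As such it is correct, and it takes a genuinely different (and more modern) route than the cited sources. Your reduction is sound at every step: symmetry of $A$ does force $A\xi=\alpha\xi$, so totally $\eta$-umbilical implies Hopf; Lemma~\ref{lemma100} applied to a unit $X\in\Gamma(D)$ with $A\phi X=\lambda\phi X$ gives $\lambda^2-\alpha\lambda-c=0$, whence $\lambda$ is constant (continuous, valued in the root set of a fixed quadratic on a connected manifold) and $\lambda(\alpha-\lambda)=-c$; the direct computation of $(\nabla_XA)Y$ from $AY=\lambda Y+(\alpha-\lambda)\eta(Y)\xi$ and $\nabla_X\xi=\phi AX$ indeed yields $(\nabla_XA)Y=\lambda(\alpha-\lambda)\{\langle\phi X,Y\rangle\xi+\eta(Y)\phi X\}=-c\{\langle\phi X,Y\rangle\xi+\eta(Y)\phi X\}$, so Theorem~\ref{lemma3} pins down type $A$, after which the multiplicity bookkeeping correctly eliminates $A_2$ (eigenvalues $\cot r,-\tan r$ resp.\ $\coth r,\tanh r$ on $D$, never equal) and the exceptional types. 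The difference from the originals is historical and methodological: Cecil--Ryan (1982) and Montiel (1985) could not invoke Kimura's and Berndt's classifications (1986, 1989); Cecil--Ryan argue via focal sets and tube constructions, and Montiel classifies hypersurfaces of $CH^n$ with at most two constant principal curvatures directly. Your route buys brevity at the cost of resting on those two heavy classification theorems; the original proofs are independent of them. Note also that your intermediate identities duplicate computations the paper itself performs elsewhere (Proposition~\ref{prop101} uses Lemma~\ref{lemma100} to get $\lambda^2=\alpha\lambda+c$, and the constancy of $\lambda$ is the fact the introduction cites to \cite{focal}, \cite{montiel}), so your proposal is consistent with the paper's toolkit. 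The one deferred item --- the model-by-model principal curvature tables for the tubes and horospheres, and the converse verification via Jacobi-field tube formulas --- is genuinely needed but standard and tabulated in \cite{Ryan}; flagging it as the sole nonformal input is a fair assessment rather than a gap.
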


The corresponding principal curvatures of the real hypersurfaces in Theorem~\ref{classifyumb} are in Table~\ref{x} ($\lambda$ is the principal curvature such that $AX=\lambda X$ for $X\in\Gamma(D)$).

\begin{table}[h]

\caption{} 
\centering 
\begin{tabular}{|c c c c c|} 
\hline 
 & Case & Radius & $\alpha$ & $\lambda$   
  \\ [0.5ex]
\hline 
 $c>0$ & (a) & $r$  & $2\cot 2r$ & $\cot r$  \\

\hline
  $c<0$ & (a)& $r$ & $2\coth 2r$ & $\coth r$  \\
 $c<0$ & (b)& $r$ & $2\coth 2r$ & $\tanh r$  \\
 $c<0$ & (c)& - & $2$ & $1$ \\ [1ex] 
\hline 
\end{tabular}

\label{x} 
\end{table}

Let $\beta=||\phi A\xi||$. If $M$ is a non-Hopf real hypersurface in $M_n(c)$ then $\beta>0$. We can define a unit vector field $U$ in $\Gamma(D)$ by $U=-\dfrac{1}{\beta}\phi^2A\xi$ and a distribution $D_U$ by
\begin{eqnarray*}
D_U=\{X\in T_xM|X\perp \xi, U, \phi U\},  x\in M.
\end{eqnarray*}
\begin{lemma}
\label{lemma1}(\cite{1})
Let $M$ be a non-Hopf real hypersurface in $M_n(c)$. Suppose $M$ satisfies the following:

(a) $A\xi=\alpha\xi+\beta U$, $AU=\beta\xi+\gamma U$ and $A\phi U=\delta\phi U$ for some functions $\gamma$ and $\delta$ on $M$;

(b) there exists a unit vector field $Z\in\Gamma(D_U)$ such that $AZ=\lambda Z$ and $A\phi Z=\lambda\phi Z$ for some function $\lambda$ on $M$.

 Then we have
\begin{equation}
\label{correct2}
\beta\lambda(\lambda-\delta)-(\lambda-\gamma)\phi U\lambda=0.
\end{equation}
\end{lemma}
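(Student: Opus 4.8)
The plan is to extract the stated scalar identity from two carefully chosen instances of the Codazzi equation, testing each against a direction for which every connection coefficient drops out except a single common one. The only tools needed are the Codazzi equation, the identities $\nabla_X\xi=\phi AX$ and $(\nabla_X\phi)Y=\eta(Y)AX-\langle AX,Y\rangle\xi$, the product rule $(\nabla_XA)Y=\nabla_X(AY)-A\nabla_XY$, and the eigenvalue data in hypotheses (a) and (b).

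First I would apply the Codazzi equation to the pair $(Z,U)$. Since $Z,U\in\Gamma(D)$ and $\langle\phi Z,U\rangle=0$ (because $\phi Z\in\Gamma(D_U)$ is orthogonal to $U$), the right-hand side vanishes, giving $(\nabla_ZA)U=(\nabla_UA)Z$. Pairing with $\phi Z$ and expanding $(\nabla_ZA)U=\nabla_Z(AU)-A\nabla_ZU$ using $AU=\beta\xi+\gamma U$, $\nabla_Z\xi=\phi AZ=\lambda\phi Z$, and $A\phi Z=\lambda\phi Z$, the term $\langle(\nabla_UA)Z,\phi Z\rangle$ collapses to $0$ (the derivative of $\lambda$ disappears since $\langle Z,\phi Z\rangle=0$, and the surviving pieces cancel against $A\phi Z=\lambda\phi Z$). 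This yields the first relation $\beta\lambda+(\gamma-\lambda)\langle\nabla_ZU,\phi Z\rangle=0$.

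Next I would apply the Codazzi equation to the pair $(Z,\phi U)$. Again the right-hand side vanishes because $\langle\phi Z,\phi U\rangle=\langle Z,U\rangle=0$, so $(\nabla_ZA)\phi U=(\nabla_{\phi U}A)Z$. This time I test against $Z$: the right side $\langle(\nabla_{\phi U}A)Z,Z\rangle$ reduces exactly to $\phi U\lambda$, since the $\nabla_{\phi U}Z$ contributions cancel through $\langle\nabla_{\phi U}Z,AZ\rangle=\lambda\langle\nabla_{\phi U}Z,Z\rangle$, while the left side, using $A\phi U=\delta\phi U$ together with $(\nabla_Z\phi)U=\eta(U)AZ-\langle AZ,U\rangle\xi=0$, becomes $(\delta-\lambda)\langle\nabla_Z\phi U,Z\rangle$. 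The key observation is that $\nabla_Z\phi U=\phi\nabla_ZU$ forces $\langle\nabla_Z\phi U,Z\rangle=-\langle\nabla_ZU,\phi Z\rangle$, so this second relation reads $\phi U\lambda=(\lambda-\delta)\langle\nabla_ZU,\phi Z\rangle$.

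Both relations now contain the single connection coefficient $\langle\nabla_ZU,\phi Z\rangle$. Multiplying the second by $(\lambda-\gamma)$ and substituting $(\lambda-\gamma)\langle\nabla_ZU,\phi Z\rangle=\beta\lambda$ from the first gives $(\lambda-\gamma)\phi U\lambda=\beta\lambda(\lambda-\delta)$, which is precisely \eqref{correct2}; note this elimination is valid even when $\lambda=\gamma$, since the substitution is performed before dividing. I expect the main difficulty to be organizational rather than conceptual: one must select exactly the pairs $(Z,U)$ and $(Z,\phi U)$ with test directions $\phi Z$ and $Z$ so that a single surviving connection term appears in each equation and it is the \emph{same} term in both. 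The delicate bookkeeping is verifying that all unwanted covariant-derivative contributions cancel — in particular that the derivatives of $\alpha,\beta,\gamma,\delta$ never enter and that the eigenvalue conditions $AZ=\lambda Z$, $A\phi Z=\lambda\phi Z$ annihilate the spurious terms.
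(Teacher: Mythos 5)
Your proposal is correct; every computation checks out (in particular the two relations $(\lambda-\gamma)\langle\nabla_ZU,\phi Z\rangle=\beta\lambda$ and $(\lambda-\delta)\langle\nabla_ZU,\phi Z\rangle=\phi U\lambda$ are consistent with, and together equivalent to, what the paper derives). The route differs from the paper's in the choice of Codazzi instances. The paper never touches the pair $(Z,U)$: it instead pairs $(\nabla_ZA)\xi-(\nabla_\xi A)Z=-c\phi Z$ with $\phi Z$ to get $\beta\langle\nabla_ZU,\phi Z\rangle=\lambda^2-\alpha\lambda-c$, then pairs $(\nabla_ZA)\phi Z-(\nabla_{\phi Z}A)Z=-2c\xi$ with both $\xi$ and $U$ to get $(\lambda-\gamma)(\lambda^2-\alpha\lambda-c)=\beta^2\lambda$, and only then eliminates the auxiliary quantity $\lambda^2-\alpha\lambda-c$ to reach the information carried by your first relation. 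Your single computation with the pair $(Z,U)$ tested against $\phi Z$ replaces those three inner products, never involves $\alpha$, and requires no division by $\beta$. The second half of your argument --- the pair $(Z,\phi U)$ tested against $Z$, the identity $\nabla_Z\phi U=\phi\nabla_ZU$, and the elimination of $\langle\nabla_ZU,\phi Z\rangle$ by cross-multiplication rather than division --- coincides with the paper's. What the paper's longer route buys is the intermediate identities involving $\lambda^2-\alpha\lambda-c$, which are of independent interest in this circle of problems but are not used elsewhere in this article; your version is the leaner proof of the stated conclusion.
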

\begin{proof}
Suppose $M$ is such a real hypersurface satisfying (a) and (b). Taking inner product in the Codazzi equation
\begin{equation*}
(\nabla_ZA)\xi-(\nabla_{\xi} A)Z=-c\phi Z
\end{equation*}
with $\phi Z$, we obtain
\begin{equation*}
\beta\langle\nabla_ZU,\phi Z\rangle=\lambda^2-\alpha\lambda-c.
\end{equation*}
Taking inner product in the Codazzi equation
\begin{equation*}
(\nabla_ZA)\phi U-(\nabla_{\phi U} A)Z=0
\end{equation*}
with $Z$, we obtain
\begin{equation*}
(\delta-\lambda)\langle\nabla_Z\phi U, Z\rangle=\phi U\lambda.
\end{equation*}
By using
\begin{equation*}
\nabla_Z\phi U=\phi\nabla_ZU,
\end{equation*}
we have
\begin{equation}
(\lambda-\delta)(\lambda^2-\alpha\lambda-c)=\beta\phi U\lambda.
\label{correct1}
\end{equation}
Taking inner product in the Codazzi equation $(\nabla_ZA)\phi Z-(\nabla_{\phi Z}A)Z=-2c\xi$ with $\xi$ and $U$ respectively, we obtain
\begin{equation*}
\langle\nabla_{\phi Z}Z-\nabla_Z {\phi}Z,U\rangle=\frac{2(\lambda^2-\alpha\lambda-c)}{\beta},
\end{equation*}
\begin{equation*}
(\lambda-\gamma)(\langle\nabla_{{\phi}Z}Z,U\rangle-\langle\nabla_Z\phi Z,U\rangle)=2\beta\lambda.
\end{equation*}
Combining these two equations, we obtain
\begin{equation}
(\lambda-\gamma)(\lambda^2-\alpha\lambda-c)-\beta^2\lambda=0.
\label{correct}
\end{equation}
From (\ref{correct1}) and (\ref{correct}), we get (\ref{correct2}).
\end{proof}

\begin{lemma}
\label{lemma100}(\cite{jdg})
Let $M$ be a Hopf hypersurface in $M_n(c)$ with $A\xi=\alpha\xi$. Then
\begin{equation*}
2A\phi A-\alpha(A\phi+\phi A)-2c\phi=0.
\end{equation*}
\end{lemma}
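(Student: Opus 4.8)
The plan is to derive the identity from the Codazzi equation together with the consequences of differentiating the Hopf condition $A\xi=\alpha\xi$. First I would differentiate this condition covariantly. From $\nabla_X(A\xi)=(\nabla_XA)\xi+A\nabla_X\xi$, the structure equation $\nabla_X\xi=\phi AX$, and $\nabla_X(\alpha\xi)=(X\alpha)\xi+\alpha\phi AX$, I obtain
\[
(\nabla_XA)\xi=(X\alpha)\xi+\alpha\phi AX-A\phi AX .
\]
Since $\alpha$ is constant on a Hopf hypersurface (cf. \cite{jdg}), the term $(X\alpha)\xi$ vanishes, leaving $(\nabla_XA)\xi=\alpha\phi AX-A\phi AX$ for every $X\in\Gamma(TM)$.

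Next I would feed this into the Codazzi equation. Taking the inner product of the Codazzi equation with $\xi$ and using $\phi\xi=0$ and $\eta(\phi X)=0$ kills the first two terms on the right and gives
\[
\langle(\nabla_XA)Y,\xi\rangle-\langle(\nabla_YA)X,\xi\rangle=-2c\langle\phi X,Y\rangle .
\]
Because $A$ is symmetric, each $\nabla_XA$ is symmetric, so $\langle(\nabla_XA)Y,\xi\rangle=\langle(\nabla_XA)\xi,Y\rangle$; substituting the formula from the first step rewrites the left-hand side entirely in terms of $A$, $\phi$ and $\alpha$.

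The remaining step is algebraic bookkeeping: I would simplify each term using the skew-symmetry of $\phi$ (so that $\langle\phi U,V\rangle=-\langle U,\phi V\rangle$) and the symmetry of $A$, for instance rewriting $\langle\phi AY,X\rangle=-\langle A\phi X,Y\rangle$ and $\langle A\phi AY,X\rangle=-\langle A\phi AX,Y\rangle$, so that every term is referred uniformly to the arguments $X$ and $Y$. Collecting the terms yields
\[
\langle(2A\phi A-\alpha(A\phi+\phi A)-2c\phi)X,Y\rangle=0
\]
for all $X,Y$, which is the asserted identity. I expect this symmetrization to be the only genuine obstacle, since one must track signs carefully and recognise the operator $2A\phi A-\alpha(A\phi+\phi A)-2c\phi$ in the final combination. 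As a structural check, this operator is skew-symmetric and annihilates $\xi$; consequently it suffices to verify the identity on $\Gamma(D)\times\Gamma(D)$, and on $D$ the possible stray contributions from $(X\alpha)\xi$ vanish because $\eta$ annihilates $D$---so the argument is in fact robust even without invoking the constancy of $\alpha$.
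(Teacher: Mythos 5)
The paper does not prove this lemma itself but cites it from \cite{jdg}; your argument is the standard derivation found there and in the survey \cite{Ryan}: differentiate $A\xi=\alpha\xi$ to get $(\nabla_XA)\xi=\alpha\phi AX-A\phi AX$ (modulo $(X\alpha)\xi$), pair the Codazzi equation with $\xi$, and use the symmetry of $\nabla_XA$. The computation is correct, and your closing observation is a genuine plus: since $2A\phi A-\alpha(A\phi+\phi A)-2c\phi$ is skew-symmetric and kills $\xi$, the stray $(X\alpha)\eta(Y)-(Y\alpha)\eta(X)$ terms drop out on $D\times D$, so the identity does not actually require the constancy of $\alpha$ (that constancy is in fact usually \emph{deduced} from the $Y=\xi$ component of the same equation).
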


\section{A Lemma}
In this section, We shall obtain a lemma that is useful for the proof of the non-Hopf case of our theorems. The generalized condition in this lemma enables it to be applied in a number of problems.

\begin{lemma}
\label{lemma2}
Let $M$ be a non-Hopf real hypersurface in $M_n(c)$. Suppose $M$ satisfies  $A\xi=\epsilon c\xi+{\beta}U$,
$AU=\beta\xi+\epsilon(\beta^2-c)U$, $A{\phi}U=-\epsilon c{\phi}U$, where $U$ is a unit vector field in
$D$, $\beta$ is a nonvanishing function defined on $M$ and $\epsilon=\pm 1$. Then $c>0$ and there exists a vector field $X\in\Gamma(D_U)$ such that $AX\neq -\epsilon X$.
\end{lemma}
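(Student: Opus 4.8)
The plan is to work throughout in the $\phi$-adapted orthonormal frame given by $\xi$, $U$, $\phi U$ together with a local orthonormal frame $\{e_i\}$ of the $\phi$-invariant distribution $D_U$, and to extract scalar identities by feeding the three hypotheses into the Codazzi equation (and, at the last step, into the Gauss equation). Since $A$ is symmetric and preserves the span of $\{\xi,U,\phi U\}$, it also preserves $D_U$, so $A|_{D_U}$ is a well-defined symmetric operator. I would first record the derivatives of $\xi$ via $\nabla_X\xi=\phi AX$: the hypotheses give $\nabla_\xi\xi=\beta\phi U$, $\nabla_U\xi=\epsilon(\beta^2-c)\phi U$ and $\nabla_{\phi U}\xi=\epsilon c\,U$, which already encode most of the connection data needed below.

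For the assertion $c>0$ I would use only the components of the Codazzi equation lying in the span of $\{\xi,U,\phi U\}$, so that the unknown $D_U$-parts of the various covariant derivatives drop out. The $\xi$- and $U$-components of the Codazzi identity for the pair $(U,\xi)$ give $\xi\beta=0$ and $U\beta=0$; the $\xi$-component for the pair $(\phi U,\xi)$ pins down the connection coefficient $\langle\nabla_\xi U,\phi U\rangle=-4\epsilon c$ (this value is delicate: overlooking that $\nabla_{\phi U}U$ has a nonzero $\xi$-component is exactly the kind of slip that alters the final constant). The $U$-component of that same identity then expresses $(\phi U)\beta$ as a polynomial in $\beta$ and $c$, while the $\phi U$-component of $(U,\xi)$ expresses $\langle\nabla_U U,\phi U\rangle$ similarly. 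Substituting these two expressions into the $U$-component of the Codazzi identity for $(U,\phi U)$ makes every power of $\beta$ cancel and leaves $6c^2-6c=0$; since $c=\pm1$ this forces $c=1$, and in particular $c>0$.

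For the existence statement I would argue by contradiction, assuming $AX=-\epsilon X$ for every $X\in\Gamma(D_U)$, so that (using $c=1$) the operator $A$ acts as $-\epsilon$ on all of the span of $\{\phi U\}\oplus D_U$. The $D_U$-valued components of the Codazzi equations for the pairs $(e_i,\xi)$, $(e_i,U)$ and $(e_i,\phi U)$ now determine the remaining connection coefficients: one finds $e_i\beta=0$, $\nabla_{e_i}U=\tfrac1\beta\phi e_i$, $\nabla_U U=p\,\phi U$ with $\beta p=1-5\beta^2$, and $\nabla_{\phi U}U=-\epsilon\xi$. With $\nabla U$ thereby known on the whole frame, I would compute the single sectional quantity $\langle R(e_i,U)U,e_i\rangle$ in two ways: the Gauss equation gives $2-\beta^2$, whereas a direct computation from the connection coefficients gives $5$. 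Equating them yields $\beta^2=-3$, which is absurd; hence some $X\in\Gamma(D_U)$ satisfies $AX\neq-\epsilon X$. (Here $n>2$ guarantees $D_U\neq\{0\}$, so an $e_i$ is available.)

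The main obstacle is the bookkeeping rather than any single idea: every scalar identity requires correctly decomposing several covariant derivatives against the full frame, and both conclusions are extremely sensitive to the exact connection coefficients, as the value $-4\epsilon c$ above illustrates. A secondary difficulty is anticipating \emph{which} contraction is decisive at each stage, namely that the $U$-component of the $(U,\phi U)$ Codazzi identity is the one that isolates $c$, and that the curvature of the plane spanned by $U$ and $e_i$ is the quantity whose intrinsic and Gauss-equation evaluations are forced to disagree.
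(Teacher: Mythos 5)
Your proposal is correct, and its verification of $c>0$ is essentially the paper's argument: both extract scalar identities by contracting the Codazzi equation against the frame $\{\xi,U,\phi U\}$, eliminate the connection coefficients $\langle\nabla_\xi U,\phi U\rangle$, $\langle\nabla_U U,\phi U\rangle$ and the derivative $\phi U\beta$, and arrive at $6c(c-1)=0$; your intermediate values ($\langle\nabla_\xi U,\phi U\rangle=-4\epsilon c$, the expression for $\phi U\beta$) agree with the paper's equations, and the only cosmetic difference is that you contract the pair $(U,\xi)$ against $\phi U$ where the paper contracts the pair $(U,\phi U)$ against $\xi$ and $U$. The second assertion is where you genuinely diverge. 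The paper also assumes $AZ=-\epsilon Z$ on all of $D_U$ and determines the connection, but it then computes $R(U,\phi U)U$ two ways, obtaining $(\beta^2-5)\phi U$ from Gauss versus $(10\beta^2-8)\phi U$ intrinsically; this forces $\beta$ to be constant, which contradicts $\phi U\beta=-2\beta^2-1\neq 0$. You instead compare the two evaluations of $\langle R(e_i,U)U,e_i\rangle$ for $e_i\in\Gamma(D_U)$: Gauss gives $2-\beta^2$ and the intrinsic computation gives $5$ (I checked that the undetermined $D_U$-internal coefficient $\langle\nabla_U e_i,\phi e_i\rangle$ cancels between the $\nabla_U\nabla_{e_i}U$ and $\nabla_{[e_i,U]}U$ terms, so the value $5$ is indeed well defined), yielding the immediate absurdity $\beta^2=-3$. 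Your variant buys a cleaner, one-line contradiction that does not route through the constancy of $\beta$ or the formula for $\phi U\beta$, at the cost of needing $D_U\neq\{0\}$ (guaranteed by $n>2$) and of having to verify the cancellation of the unknown $D_U$-internal connection data; the paper's choice of the $(U,\phi U)$-plane avoids that cancellation issue because all the relevant covariant derivatives are fully pinned down.
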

\begin{proof}
Suppose $M$ is such a real hypersurface. Taking inner product in the Codazzi equation $(\nabla_UA)\phi U-(\nabla_{\phi U}A)U=-2c\xi$ with $U$ and $\xi$ respectively, we obtain
\begin{equation}
-\beta\langle\nabla_U\phi U,U\rangle+\beta^2-3c-2\phi U\beta=0,
\label{6.14.1}
\end{equation}
\begin{equation*}
-\beta\langle\nabla_U\phi U,U\rangle+3c\beta^2-4c^2+2c-\phi U\beta=0.
\end{equation*}
From these two equations, we obtain
\begin{equation}
\label{641}
\beta^2-3c\beta^2+4c^2-5c-\phi U\beta=0.
\end{equation}
Taking inner product in the Codazzi equation
\begin{equation}
\label{100}
(\nabla_{\phi U}A)\xi-(\nabla_{\xi}A)\phi U=cU
\end{equation}
with $U$, $\xi$, $\phi U$ respectively, we obtain
\begin{equation}
\epsilon\beta^2\langle\nabla_{\xi}\phi U,U\rangle+2c^2-c\beta^2-\beta^2-c+\phi U\beta=0,
\label{zzz}
\end{equation}
\begin{equation}
\langle\nabla_{\xi}\phi U,U\rangle-4\epsilon c=0,
\label{6.14.4}
\end{equation}
\begin{equation}
\label{a.6}
\langle\nabla_{\phi U}U,\phi U\rangle=0.
\end{equation}
From (\ref{zzz}) and (\ref{6.14.4}), we obtain
\begin{equation}
\label{642}
3c\beta^2+2c^2-\beta^2-c+\phi U\beta=0.
\end{equation}
By summing up (\ref{641}) and (\ref{642}), we obtain
\begin{equation*}
c(c-1)=0,
\end{equation*}
which cannot happen when $c=-1$. Hence $c=1$ and (\ref{641}) becomes
\begin{equation}
\label{a.3}
\phi U\beta=-2\beta^2-1.
\end{equation}
By substituting (\ref{a.3}) into (\ref{6.14.1}), we have
\begin{equation}
\label{a.4}
\beta\langle\nabla_U\phi U,U\rangle=5\beta^2-1.
\end{equation}
From (\ref{6.14.4}), we have
\begin{equation}
\label{a.5}
\langle\nabla_{\xi}\phi U,U\rangle=4\epsilon.
\end{equation}
In order to prove the second assertion, we shall suppose to the contrary that for any $Z\in\Gamma(D_U)$, $AZ=-\epsilon Z$.
Take inner product in the Codazzi equation $(\nabla_ZA)\xi-(\nabla_{\xi}A)Z=-\phi Z$ with $U$, $\phi U$, $\xi$ respectively,
\begin{equation}
\label{a.7}
Z\beta+\epsilon\beta^2\langle\nabla_{\xi}Z,U\rangle=0,
\end{equation}
\begin{equation}
\label{a.8}
\langle\nabla_Z U,\phi U\rangle=0,
\end{equation}
\begin{equation}
\label{a.9}
\langle\nabla_{\xi}Z,U\rangle=0.
\end{equation}
(\ref{a.7}) and (\ref{a.9}) imply $Z\beta=0$. Taking inner product in the Codazzi equation $(\nabla_ZA)U-(\nabla_UA)Z=0$ with $U$ and with the help of $Z\beta=0$, we have
\begin{equation}
\label{a.12}
\langle\nabla_UZ,U\rangle=0.
\end{equation}
Taking inner product with $Z$ in (\ref{100}), we have
\begin{equation}
\label{a.11}
\langle\nabla_{\phi U}U,Z\rangle=0.
\end{equation}
From (\ref{a.6}), (\ref{a.11}) and $\langle\nabla_{\phi U}U,\xi\rangle=-\epsilon$, we obtain
\begin{equation}
\label{a.13}
\nabla_{\phi U}U=-\epsilon\xi.
\end{equation}
Hence
\begin{equation}
\label{a.14}
\nabla_{\phi U}\phi U=(\nabla_{\phi U}\phi)U+\phi\nabla_{\phi U}U=0.
\end{equation}
From (\ref{a.4}), (\ref{a.12}) and $\langle\nabla_UU,\xi\rangle=0$, we obtain
\begin{equation}
\label{a.15}
\nabla_UU=\frac{1-5\beta^2}{\beta}\phi U.
\end{equation}
Hence
\begin{equation}
\label{a.16}
\nabla_U\phi U=\epsilon(1-\beta^2)\xi+\frac{5\beta^2-1}{\beta}U.
\end{equation}
From (\ref{a.5}), (\ref{a.9}) and $\langle\nabla_{\xi}U,\xi\rangle=0$, we obtain
\begin{equation}
\label{a.17}
\nabla_{\xi} U=-4\epsilon\phi U.
\end{equation}
Finally, we also have
\begin{equation}
\label{a.18}
\nabla_{\phi U}\xi=\phi A\phi U=\epsilon U.
\end{equation}
Let $X=U$, $Y=\phi U$ and $Z=U$ in the Gauss equation. Then we have
\begin{equation}
R(U,\phi U)U=(\beta^2-5)\phi U.
\label{a.19}
\end{equation}
On the other hand, it follows from (\ref{a.3}), (\ref{a.13})--(\ref{a.18}) and
\begin{equation*}
R(U,\phi U)U=\nabla_U\nabla_{\phi U}U-\nabla_{\phi U}\nabla_UU-\nabla_{[U,\phi U]}U
\end{equation*}
that
\begin{equation}
R(U,\phi U)U=(10\beta^2-8)\phi U.
\label{a.20}
\end{equation}
From (\ref{a.19}) and (\ref{a.20}), we see that $\beta$ is a constant. This contradicts (\ref{a.3}).
\end{proof}

\section{Auxiliary propositions}
In this section we mainly focus on the Hopf case and prove some propositions about Hopf hypersurfaces in preparation for the proof of our theorems.
\begin{proposition}
\label{prop101}
Let $M$ be a real hypersurface in $M_n(c)$. If $M$ is totally $\eta$-umbilical then $M$ satisfies (\ref{umb}). Furthermore, $k\neq 0$.
\end{proposition}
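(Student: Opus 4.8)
The plan is to combine the explicit shape operator of a totally $\eta$-umbilical hypersurface with formula (\ref{g2}) for $(\nabla_X R_\xi)Y$, reducing everything to a scalar multiple of the right-hand side of (\ref{umb}), and then to pin down the scalar using Lemma~\ref{lemma100}.

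First I would record the structural data. If $M$ is totally $\eta$-umbilical there is a constant $\lambda$ with $AX=\lambda X$ for all $X\in\Gamma(D)$; moreover, by Theorem~\ref{classifyumb} such an $M$ is of type $A$, hence Hopf, so $A\xi=\alpha\xi$ with $\alpha$ constant. Equivalently $AY=\lambda Y+(\alpha-\lambda)\eta(Y)\xi$ for all $Y$. Being of type $A$, Theorem~\ref{lemma3} supplies the Codazzi-type identity $(\nabla_XA)Y=-c(\langle\phi X,Y\rangle\xi+\eta(Y)\phi X)$. Together with the immediate consequences $\phi AX=\lambda\phi X$, $A\phi AX=\lambda^2\phi X$, $\eta(AY)=\alpha\eta(Y)$, $X\alpha=0$, $(\nabla_XA)\xi=-c\phi X$ and $\langle(\nabla_XA)Y,\xi\rangle=-c\langle\phi X,Y\rangle$ (the last two from $\langle\phi X,\xi\rangle=0$), these are all the ingredients needed.

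Next I would substitute into (\ref{g2}) and simplify the eight terms one at a time; each collapses to a multiple of either $\langle\phi X,Y\rangle\xi$ or $\eta(Y)\phi X$. For example the $(X\alpha)AY$ term vanishes, $\alpha(\nabla_XA)Y$ contributes $-\alpha c$ to each coefficient, and the two $A\xi$-terms reduce via the identities above. Gathering the coefficient of $\langle\phi X,Y\rangle\xi$ gives $-c\lambda-\alpha c+c\alpha-\alpha\lambda^2$, and the coefficient of $\eta(Y)\phi X$ gives $-c\lambda-\alpha c+\alpha c-\alpha\lambda^2$; both collapse to $-c\lambda-\alpha\lambda^2=-\lambda(c+\alpha\lambda)$. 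Hence $(\nabla_XR_\xi)Y=-\lambda(c+\alpha\lambda)(\langle\phi X,Y\rangle\xi+\eta(Y)\phi X)$, which is exactly (\ref{umb}) with $k=-\lambda(c+\alpha\lambda)$.

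Finally, for $k\neq0$ I would invoke Lemma~\ref{lemma100}. Applying $2A\phi A-\alpha(A\phi+\phi A)-2c\phi=0$ to a nonzero $X\in\Gamma(D)$ and using $A\phi X=\lambda\phi X$ yields $(\lambda^2-\alpha\lambda-c)\phi X=0$, so $\lambda^2-\alpha\lambda-c=0$. This simplifies the scalar to $c+\alpha\lambda=\lambda^2$, whence $k=-\lambda^3$. Since $c=\pm1\neq0$, the relation $\lambda^2-\alpha\lambda-c=0$ rules out $\lambda=0$, and therefore $k=-\lambda^3\neq0$. The only genuinely delicate point is the bookkeeping in the term-by-term reduction of (\ref{g2}): one must keep the $\xi$-component and the $\phi X$-component separate and verify that the four contributions to each coefficient really do collapse to the same value. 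Everything else is a short algebraic consequence of the two structural identities.
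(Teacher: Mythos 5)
Your proposal is correct and follows essentially the same route as the paper: substitute the totally $\eta$-umbilical form of $A$ together with the type-$A$ identity of Theorem~\ref{lemma3} into (\ref{g2}), collect the coefficient $-\lambda(c+\alpha\lambda)$ of $\langle\phi X,Y\rangle\xi+\eta(Y)\phi X$, and then use Lemma~\ref{lemma100} to get $\lambda^2=\alpha\lambda+c$, hence $k=-\lambda^3\neq0$. The term-by-term bookkeeping you flag as the delicate point does check out.
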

\begin{proof}
For a totally $\eta$-umbilical real hypersurface $M$, we have
\begin{equation}
AX=\lambda X+(\alpha-\lambda)\eta(X)\xi
\label{2.1}
\end{equation}
for any $X\in\Gamma(TM)$, where $\lambda$ and $\alpha$ are two constants. Applying (\ref{2.1}) and Theorem~\ref{lemma3} 
 to (\ref{g2}), we have
\begin{eqnarray}
(\nabla_XR_{\xi})Y=-\lambda(c+\alpha\lambda)\{\langle Y,\phi X\rangle\xi+\eta(Y)\phi X\}.
\label{oct16}
\end{eqnarray}
Hence $M$ satisfies (\ref{umb}). From Lemma~\ref{lemma100}, we have $\lambda^2=\alpha\lambda+c$, hence $\lambda\neq 0$. Then from the right-hand side of (\ref{oct16}), we see that $k=-\lambda(c+\alpha\lambda)=-\lambda^3\neq 0$.
\end{proof}






\begin{proposition}
\label{105}
Let $M$ be a Hopf hypersurface in $CH^n$ with $\alpha=1$. Then $M$ satisfies (\ref{1}) for $k=1$ and does not satisfy (\ref{umb}) for $k=1$.
\end{proposition}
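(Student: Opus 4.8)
The plan is to exploit the fact that, for a Hopf hypersurface in $CH^n$ (so $c=-1$) with $\alpha=1$, the structure Jacobi operator collapses to a trivial expression. First I would substitute $c=-1$, $\alpha=1$ and $A\xi=\xi$ into (\ref{g1}). Since $M$ is Hopf, $\eta(AY)=\langle Y,A\xi\rangle=\alpha\eta(Y)=\eta(Y)$, and a one-line computation gives $R_{\xi}Y=-(Y-\eta(Y)\xi)+AY-\eta(Y)\xi=(A-I)Y$ for every $Y$; that is, $R_{\xi}=A-I$ as $(1,1)$-tensor fields. Because the identity tensor is parallel, this at once yields the crucial reduction $(\nabla_XR_{\xi})Y=(\nabla_XA)Y$, which converts every condition on $\nabla R_{\xi}$ into a condition on $\nabla A$, where the Codazzi equation and Theorem~\ref{lemma3} are available.

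For the first assertion I would simply antisymmetrise. From the reduction, $(\nabla_XR_{\xi})Y-(\nabla_YR_{\xi})X=(\nabla_XA)Y-(\nabla_YA)X$, and the Codazzi equation with $c=-1$ makes this equal to $2\langle\phi X,Y\rangle\xi+\eta(Y)\phi X-\eta(X)\phi Y$. Pairing with $W$ reproduces exactly the right-hand side of (\ref{1}) with $k=1$; nothing further is required.

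For the second assertion I would argue by contradiction. If $M$ satisfied (\ref{umb}) with $k=1$, the reduction would turn it into $(\nabla_XA)Y=\langle\phi X,Y\rangle\xi+\eta(Y)\phi X=-c\{\langle\phi X,Y\rangle\xi+\eta(Y)\phi X\}$, which is precisely the characterisation of Theorem~\ref{lemma3}; hence $M$ would be of type $A$, and in particular $A\phi=\phi A$. Since $A$ preserves $D$ and is self-adjoint there, I would then pick a unit $X\in D$ with $AX=\lambda X$ for a real $\lambda$, and feed it into Lemma~\ref{lemma100}. Using $A\phi X=\phi AX=\lambda\phi X$ and $A\phi AX=\lambda^2\phi X$, the lemma reduces to $(\lambda^2-\alpha\lambda-c)\phi X=0$, i.e. $\lambda^2-\lambda+1=0$, which has no real root, contradicting the reality of the principal curvature $\lambda$.

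The individual computations are short; the real content, and the step I would be most careful to get right, is the reduction $R_{\xi}=A-I$, which is special to the values $c=-1$, $\alpha=1$ and is what makes both halves fall out cleanly. The only subtlety in the second half is confirming that the type-$A$ condition forces a real quadratic with negative discriminant. Equivalently, one may appeal to the classification: every type-$A$ hypersurface in $CH^n$ has $\alpha=2\coth 2r>2$ or $\alpha=2$ (see Table~\ref{x}), so $\alpha=1$ is impossible, giving the same contradiction.
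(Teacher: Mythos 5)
Your proof is correct and follows essentially the same route as the paper: both hinge on the reduction $(\nabla_X R_\xi)Y=(\nabla_X A)Y$ (which the paper extracts from (\ref{g2}) together with $(\nabla_XA)\xi=\phi AX-A\phi AX$, and which you obtain more directly from the identity $R_\xi=A-I$), then apply the Codazzi equation for the first claim and Theorem~\ref{lemma3} for the second. Your self-contained contradiction via Lemma~\ref{lemma100} (the quadratic $\lambda^2-\lambda+1=0$ has no real root) is sound and replaces the paper's citation that $\alpha\neq 1$ for type-$A$ hypersurfaces; only your optional alternative via Table~\ref{x} is slightly imprecise, since that table does not cover type $A_2$.
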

\begin{proof}
We suppose $k=1$. Then (\ref{g2}) reduces to
\begin{eqnarray}
(\nabla_XR_{\xi})Y=\langle Y,\phi AX\rangle\xi+\eta(Y)\phi AX+(\nabla_XA)Y
-\langle Y,(\nabla_XA)\xi\rangle\xi
\nonumber\\
-\langle Y,A\phi AX\rangle\xi-\eta(Y)(\nabla_XA)\xi-\eta(Y)A\phi AX.
\label{oct5-1}
\end{eqnarray}
By a direct computation, we have
$(\nabla_XA)\xi=\phi AX-A\phi AX$. Substituting this equation into (\ref{oct5-1}), we obtain
\begin{eqnarray}
(\nabla_XR_{\xi})Y=(\nabla_XA)Y.
\label{last}
\end{eqnarray}
By using (\ref{last}) and the Codazzi equation, we see that $M$ satisfies (\ref{1}) for $k=1$.

Next, suppose there exists a Hopf hypersurface $M$ satisfying (\ref{umb}) for $k=1$. Then by (\ref{last}), (\ref{umb}) becomes
\begin{equation}
(\nabla_XA)Y=\langle\phi X,Y\rangle\xi+\eta(Y)\phi X.
\label{2.2}
\end{equation} By Theorem~\ref{lemma3}, $M$ is locally congruent to a Hopf hypersurface of type $A$ with $\alpha=1$. According to \cite[page 254-257, 260]{Ryan}, $\alpha\neq1$ for real hypersurfaces of type $A$. This is a contradiction. Hence we conclude that $M$ does not satisfy (\ref{umb}) for $k=1$.
\end{proof}

Let $M$ be a real hypersurface in $M_n(c)$ satisfying (\ref{1}). Then by applying (\ref{g2}) and the Codazzi equation, (\ref{1}) becomes
\begin{eqnarray}
0=\langle(\alpha c+k)(\eta(X)\phi Y-\eta(Y)\phi X-2\langle\phi X,Y\rangle\xi)+2\langle AX,\phi AY\rangle A\xi \nonumber \\
+c(\langle X,(A\phi+\phi A)Y\rangle\xi+2\langle\phi X,Y\rangle A\xi-\eta(Y)\phi AX+\eta(X)\phi AY) \nonumber \\
+(X\alpha)AY-(Y\alpha)AX+\eta(AX)(\nabla_YA)\xi-\eta(AY)(\nabla_XA)\xi \nonumber \\ +\eta(AX)A\phi AY-\eta(AY)A\phi AX,W\rangle
\label{2}
\end{eqnarray}
for any $X,Y,W\in\Gamma(TM)$.

\begin{proposition}
\label{103}
Let $M$ be a Hopf hypersurface in $M_n(c)$ satisfying (\ref{1}). Then either

(1). $M$ is a totally $\eta$-umbilical real hypersurface;

or

(2). $\alpha=k=1$, $c=-1$.
\end{proposition}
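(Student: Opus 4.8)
The plan is to start from the master identity (\ref{2}), which holds for any $X,Y,W\in\Gamma(TM)$ under hypothesis (\ref{1}), and exploit the Hopf condition $A\xi=\alpha\xi$ to simplify it drastically. Since $M$ is Hopf, we have $\alpha$ constant, $\eta(AX)=\alpha\eta(X)$, and $(\nabla_XA)\xi=\phi AX - A\phi AX + (X\alpha)\xi = \phi AX - A\phi AX$. Substituting these into (\ref{2}) should eliminate the terms involving $X\alpha$ and $Y\alpha$, and collapse the $\eta(AX),\eta(AY)$ terms into multiples of $\eta(X),\eta(Y)$. I expect (\ref{2}) to reduce, after setting $W$ free, to a tensorial identity of the form $(\alpha c + k)\bigl(\eta(X)\phi Y - \eta(Y)\phi X - 2\langle\phi X,Y\rangle\xi\bigr) + (\text{curvature/shape terms}) = 0$.

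Next I would feed in special choices of $X,Y$ to split the analysis by how the shape operator acts on $D$. First I would restrict to $X,Y\in\Gamma(D)$, which kills all the $\eta$-terms and should leave a relation purely among $A$, $\phi A$, $A\phi A$ on the holomorphic distribution; invoking Lemma~\ref{lemma100}, namely $2A\phi A - \alpha(A\phi + \phi A) - 2c\phi = 0$, to rewrite the $A\phi A$ contributions should let me deduce that on $D$ the operator $A$ is forced into a constrained form. The expected dichotomy is that $D$ either consists entirely of principal directions with a single eigenvalue $\lambda$ (giving total $\eta$-umbilicity, case (1)), or that a rigid numerical relation among $\alpha$, $k$, and $c$ must hold. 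Setting $Y=\xi$ (so $\eta(Y)=1$) in the simplified identity should, by contrast, produce a scalar relation directly linking $k$ and $\alpha c$.

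To separate the two cases I would pick a principal vector $X\in\Gamma(D)$ with $AX=\lambda X$, so that $A\phi X = \mu\phi X$ for the partner eigenvalue $\mu$ determined by Lemma~\ref{lemma100}; plugging this eigenpair into the reduced identity yields a polynomial condition in $\lambda$. If every $\lambda$ satisfies it, the hypersurface is type $A$ (equivalently $\lambda=\mu$ for all eigenvalues), and Theorem~\ref{lemma3} together with Theorem~\ref{classifyumb} identifies $M$ as totally $\eta$-umbilical, giving case (1). Otherwise the polynomial forces a specific eigenvalue configuration that is only consistent with the numerical constraint $\alpha c + k = 0$ combined with the scalar relation from the $Y=\xi$ substitution; I expect these two to close up to $\alpha = k = 1$ and $c = -1$, which is case (2).

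The hard part will be the case analysis in the second step: the reduced identity on $D$ mixes $\phi$, $A$, and the curvature constant $c$, and I anticipate that naively setting $X=Y$ or $X=\phi Y$ produces relations that are individually too weak, so the real work is finding the right combination of test vectors (likely an eigenvector and its $\phi$-image, together with the $Y=\xi$ specialization) that simultaneously pins down the eigenvalue structure and the sign of $c$. In particular, ruling out spurious solutions where $D$ splits into two distinct eigenspaces without collapsing to either case will require careful use of Lemma~\ref{lemma100} to express $\mu$ in terms of $\lambda$ and $\alpha$, and then checking that the resulting polynomial in $\lambda$ is compatible only on the two advertised branches.
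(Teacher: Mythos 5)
Your strategy---specializing the integrability condition (\ref{2}) at $\xi$ and at principal directions of $D$, then splitting on a numerical coefficient---is exactly the paper's, and your ``$Y=\xi$ specialization'' is the decisive step. However, your described case analysis is both overcomplicated and partly backwards. Putting $X=\xi$, $Y,W\in\Gamma(D)$ into (\ref{2}) (using that $\alpha$ is constant and that the two $A\phi A$-terms cancel against each other) gives $(\alpha c+k)\langle\phi Y,W\rangle+(\alpha^2+c)\langle\phi AY,W\rangle=0$, and with $AY=\lambda Y$, $W=\phi Y$ this reads $(\alpha^2+c)\lambda+(\alpha c+k)=0$: a relation \emph{linear in each individual eigenvalue} $\lambda$, with no appearance of the $\phi$-partner eigenvalue $\mu$. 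So the dichotomy is simply on whether $\alpha^2+c$ vanishes: if $\alpha^2+c\neq 0$, every principal curvature on $D$ equals $-(\alpha c+k)/(\alpha^2+c)$ and $M$ is totally $\eta$-umbilical outright---no appeal to Lemma~\ref{lemma100}, no quadratic in $\lambda$, and no two-eigenspace configurations to rule out; if $\alpha^2+c=0$, then $c=-1$, $\alpha=\pm 1$ (take $\alpha=1$ after flipping $N$), and the relation degenerates to $\alpha c+k=0$, i.e.\ $k=1$. Two specific points in your plan would lead you astray if executed as written: first, you have the branches swapped---``every $\lambda$ satisfies it'' is the degenerate branch $\alpha^2+c=0$, which is case (2), not case (1); second, ``type $A$'' is \emph{not} equivalent to totally $\eta$-umbilical (type $A_2$ hypersurfaces carry two distinct curvatures on $D$), so routing case (1) through Theorem~\ref{lemma3} would yield the wrong class. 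Your preliminary restriction to $X,Y\in\Gamma(D)$ with $W=\xi$ only controls $\lambda+\mu$ and is strictly weaker than the $\xi$-slot substitution, which does all the work here.
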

\begin{proof}

Recall that $\alpha$ is a constant when $M$ is Hopf.
Let $X=\xi$, $Y,W\in\Gamma(D)$ in (\ref{2}), we have
\begin{equation}
\label{hopf1}
(\alpha c+k)\langle\phi Y,W\rangle+(\alpha^2+c)\langle\phi AY,W\rangle=0.
\end{equation}
Let $Y\in\Gamma(D)$ be a unit principal vector field
with $AY=\lambda Y$ and let $W=\phi Y$ in (\ref{hopf1}). Then
\begin{equation}
\label{22}
(\alpha^2+c)\lambda+(\alpha c+k)=0.
\end{equation}
We consider two cases: $\alpha^2+c\neq 0$, $\alpha^2+c=0$.

{\bf {\sc Case-i}}. {\emph{$\alpha^2+c\neq 0$.}}

Since $Y$ is an arbitrary unit principal vector field
 in $\Gamma(D)$, we have $AX=\lambda X$ for all $X\in\Gamma(D)$, where
\begin{eqnarray*}
\lambda=-\dfrac{\alpha c+k}{\alpha^2+c}.
\end{eqnarray*}
Therefore, $M$ is totally $\eta$-umbilical.

{\bf {\sc Case-ii}}. {\emph{$\alpha^2+c=0$.}}

Then $c=-1$, and by replacing the unit normal vector
field $N$ with $-N$ if necessary, we have $\alpha=1$. By (\ref{22}), we have
 $k=\alpha=1$.
\end{proof}

\section{Proof of the theorems}
In this section we first prove the non-existence of non-Hopf
 real hypersurface satisfying condition (\ref{umb}) or (\ref{1}), then prove the theorems.



Let $M$ be a real hypersurface in $M_n(c)$. Letting $W=\xi$ in (\ref{2}), we have
\begin{eqnarray}
2k\langle\phi X,Y\rangle+2\alpha\langle A\phi AX,Y\rangle+c\langle(\phi A+A\phi)X,Y\rangle \nonumber \\ -\eta(AY)\eta(A\phi AX)+\eta(AX)\eta(A\phi AY)=0
\label{8}
\end{eqnarray}
for any $X,Y\in\Gamma(TM)$. From (\ref{8}), we have
\begin{eqnarray}
2k\phi X+2\alpha A\phi AX+c(\phi A+A\phi)X-\eta(A\phi AX)A\xi-\eta(AX)A\phi A\xi=0
\label{9}
\end{eqnarray}
for any $X\in\Gamma(TM)$.

 \begin{proposition}
\label{102}
There does not exist any non-Hopf real hypersurface $M$ in $M_n(c)$ satisfying (\ref{1}).
\end{proposition}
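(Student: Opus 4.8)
The plan is to argue by contradiction: assume $M$ is a non-Hopf real hypersurface in $M_n(c)$ satisfying (\ref{1}), so that (\ref{2}), (\ref{8}) and (\ref{9}) all hold. Since $M$ is non-Hopf we have $\beta=\|\phi A\xi\|>0$, and we may adopt the adapted frame built from $\xi$, the unit field $U=-\frac{1}{\beta}\phi^2A\xi\in\Gamma(D)$, $\phi U$, and the $\phi$-invariant distribution $D_U$; by construction $A\xi=\alpha\xi+\beta U$. The goal is to show that (\ref{9}) forces the shape operator to take precisely the form appearing in the hypotheses of Lemma~\ref{lemma2}, together with $AX=-\epsilon X$ for every $X\in\Gamma(D_U)$, and then to derive a contradiction from the second assertion of Lemma~\ref{lemma2}.

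First I would evaluate (\ref{9}) on the frame directions $X=\xi,U,\phi U$ and read off its components against $\xi$, $U$, $\phi U$ and against $D_U$. Using the symmetry of $A$ (so that $\langle A\phi U,\xi\rangle=0$ and $\langle AU,\xi\rangle=\beta$) and the fact that $\eta(AX)=0$ for $X\in\Gamma(D_U)$, these component equations should show that the span of $\{\xi,U,\phi U\}$ is $A$-invariant, that the off-diagonal term $\langle AU,\phi U\rangle$ vanishes, and that the diagonal entries are pinned down. Writing $AU=\beta\xi+\gamma U$ and $A\phi U=\delta\phi U$, I expect the resulting algebraic relations among $\alpha,\beta,\gamma,\delta,c,k$ to yield, after fixing the sign $\epsilon=\pm1$ by replacing $N$ with $-N$ if necessary, the identities $\alpha=\epsilon c$, $\delta=-\epsilon c$ and $\gamma=\epsilon(\beta^2-c)$; this is exactly the shape-operator data required by Lemma~\ref{lemma2}.

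Next I would substitute an arbitrary $X\in\Gamma(D_U)$ into (\ref{9}). Because $\eta(X)=\eta(AX)=0$ there, (\ref{9}) collapses to $2k\phi X+2\alpha A\phi AX+c(\phi A+A\phi)X-\eta(A\phi AX)A\xi=0$; pairing this with $\xi$ and with vectors in $D_U$, and using the $\phi$-invariance of $D_U$ together with the block form already obtained, I expect to conclude that $A$ preserves $D_U$ and acts on it as the scalar $-\epsilon$, i.e. $AX=-\epsilon X$ for all $X\in\Gamma(D_U)$. At this stage the hypotheses of Lemma~\ref{lemma2} are fully verified, so the lemma applies and guarantees the existence of a vector field $X\in\Gamma(D_U)$ with $AX\neq-\epsilon X$, directly contradicting $AX=-\epsilon X$ for all such $X$. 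This contradiction completes the proof.

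The main obstacle will be the second step: extracting from the single tensorial identity (\ref{9}), restricted to $D_U$, that $A$ acts there as a pure scalar equal to $-\epsilon$, while simultaneously fixing the $\{\xi,U,\phi U\}$-block. I anticipate this requiring a short case analysis (for instance separating $\alpha=0$ from $\alpha\neq0$, or the vanishing versus non-vanishing of the $D_U$-component of $A\phi U$), handled by taking suitable inner products of (\ref{9}) and, where convenient, supplementing with (\ref{8}) and the Codazzi equation; the eigenvalue relation on $D_U$ may also be cross-checked against (\ref{correct2}) of Lemma~\ref{lemma1}.
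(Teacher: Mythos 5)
Your overall skeleton is the same as the paper's: reduce the shape operator to the block form appearing in Lemma~\ref{lemma2}, show $AX=-\epsilon X$ on $D_U$, and contradict the second assertion of that lemma. But there is a genuine gap in how you propose to reach that form: you claim it can be extracted from the purely algebraic identity (\ref{9}) evaluated on the frame and on $D_U$, and this is false. Evaluating (\ref{9}) at $X=\xi$ and $X=\phi U$ yields only $A\phi U=-\tfrac{c}{\alpha}\phi U$ and $AU=\beta\xi+\bigl(\tfrac{2k}{c}+\tfrac{\beta^2-c}{\alpha}\bigr)U$, with $\alpha$ and $k$ completely unconstrained (evaluating at $X=U$ gives nothing new); your target identities $\alpha=\epsilon c$, $\delta=-\epsilon c$, $\gamma=\epsilon(\beta^2-c)$ would additionally require $c=1$, $\alpha=\pm1$ and $k=0$, none of which follow at this stage. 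Likewise, for a principal $X\in\Gamma(D_U)$ with $AX=\lambda X$, (\ref{9}) only gives $(2\alpha\lambda+c)A\phi X=-(2k+c\lambda)\phi X$, a symmetric pairing between the eigenvalues of $X$ and $\phi X$; it in no way forces $A$ to act on $D_U$ as a single scalar, let alone as $-\epsilon$. Note also that (\ref{8}) is just the inner-product form of (\ref{9}), so ``supplementing with (\ref{8})'' adds nothing, and Lemma~\ref{lemma1} cannot be used to \emph{establish} the scalar action, since its hypothesis (b) already assumes $AZ=\lambda Z$ and $A\phi Z=\lambda\phi Z$ with the same $\lambda$ --- invoking it there would be circular.

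The missing idea is that one must use the covariant-derivative terms of the full condition (\ref{2}) (the terms $(X\alpha)AY-(Y\alpha)AX$, $\eta(AX)(\nabla_YA)\xi$, $\eta(AX)A\phi AY$), not merely its trace against $\xi$. The paper takes $X=\phi U$, $X=U$ and $X=\xi$ with $Y\in\Gamma(D_U)$ in (\ref{2}): the first gives $Y\alpha=0$, and eliminating $(\nabla_YA)\xi+A\phi AY$ between the other two yields the \emph{linear} relation $c\lambda+\alpha c+k=0$ for every principal curvature $\lambda$ on $D_U$, which is what makes $A|_{D_U}$ a single scalar $\lambda=-(\alpha c+k)/c$. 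Only then does (\ref{9}) with $X=Y$ give $\alpha\lambda^2+c\lambda+k=0$, whence $\lambda^2=c$, so $c=1$, $\lambda=\pm1$ and $\phi U\lambda=0$; at that point Lemma~\ref{lemma1} (now legitimately applicable) forces $\alpha=-\lambda$ and $k=0$, and the hypotheses of Lemma~\ref{lemma2} are finally met with $\epsilon=-\lambda$. Without this derivative-based step your argument stalls exactly at the place you yourself flag as ``the main obstacle,'' and the case analysis you sketch there cannot close it using (\ref{9}) alone.
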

\begin{proof}

Suppose $M$ is a non-Hopf real hypersurface satisfying (\ref{1}) with $A\xi=\alpha\xi+\beta U$, $\beta$
a non-vanishing function and $U\in\Gamma(D)$ a unit vector field. 


Let $X=\xi$ in (\ref{9}). Then we have $\alpha\neq 0$ and
\begin{eqnarray}
A\phi U=-\frac{c}{\alpha}\phi U.
\label{10}
\end{eqnarray}
Let $X=\phi U$ in (\ref{9}). Then with the help of (\ref{10}) we obtain
\begin{eqnarray}
AU=\beta\xi+(\frac{2k}{c}+\frac{\beta^2-c}{\alpha})U.
\label{11}
\end{eqnarray}
Hence $D_U$ is $A$-invariant. 



Let $X=\phi U$, $Y\in\Gamma(D_U)$, $W\in\Gamma(TM)$ in (\ref{2}). Then we have
\begin{equation}
\label{non1}
(\phi U\alpha) AY+\frac{c}{\alpha}(Y\alpha)\phi U=0.
\end{equation}
Taking inner product with $\phi U$ in (\ref{non1}), we have $Y\alpha=0$.

Let $X=U$, $Y\in\Gamma(D_U)$, $W\in\Gamma(TM)$ in (\ref{2}). Then with the help of $Y\alpha=0$, we have
\begin{equation}
\label{non2}
(U\alpha) AY+\beta(\nabla_{Y}A)\xi+\beta A\phi AY=0.
\end{equation}
Let $X=\xi$, $Y\in\Gamma(D_U)$, $W\in\Gamma(TM)$ in (\ref{2}). Then by the fact that $Y\alpha=0$, we have
\begin{equation}
\label{non4}
(\xi\alpha) AY+c\phi AY+(\alpha c+k)\phi Y+\alpha(\nabla_YA)\xi+\alpha A\phi AY=0.
\end{equation}
From (\ref{non2}) and (\ref{non4}), we have
\begin{equation}
\label{non5}
(\xi\alpha-\frac{\alpha}{\beta}U\alpha) AY+c\phi AY+(\alpha c+k)\phi Y=0.
\end{equation}
Let $Y$ be unit vector field with $AY=\lambda Y$ and take inner product with $\phi Y$ in (\ref{non5}). Then we obtain
\begin{equation}
\label{non6}
c\lambda+\alpha c+k=0.
\end{equation}
Therefore,
\begin{equation}
\label{new}
AX=\lambda X, (\lambda=-\dfrac{\alpha c+k}{c})
\end{equation}
for any $X\in\Gamma(D_U)$.
On the other hand, if we put $X=Y$ in (\ref{9}), then we have
\begin{equation}
\alpha\lambda^2+c\lambda+k=0.
\label{15}
\end{equation}
From (\ref{non6}) and (\ref{15}) we have
\begin{equation}
\lambda^2=c.
\label{non7}
\end{equation}
Hence $c=1$ and $\lambda=\pm 1$. It follows that $\phi U\lambda=0$. So by Lemma~\ref{lemma1}, (\ref{10}), (\ref{15}) and (\ref{non7}), we obtain $\alpha=-\lambda(=\mp 1)$ and $k=0$. By substituting all these quantities into (\ref{10}), (\ref{11}) and (\ref{new}), we obtain a contradiction according to Lemma~\ref{lemma2}. This completes the proof.
 \end{proof}

\begin{proof}[Proof of Theorem~\ref{th0}]
($\Leftarrow$). It follows directly from Proposition~\ref{prop101} and Proposition~\ref{105}.

($\Rightarrow$). Suppose $M$ satisfies (\ref{1}). By Proposition~\ref{103} and
 Proposition~\ref{102}, $M$ is either totally $\eta$-umbilical or a Hopf hypersurface
 with $\alpha=k=1$, $c=-1$. Furthermore, by Proposition~\ref{prop101}, we can see that $k\neq 0$.
\end{proof}

\begin{proof}[Proof of Theorem~\ref{th3}]
($\Leftarrow$). It has been proved in Proposition~\ref{prop101}.

($\Rightarrow$). Suppose $M$ is a real hypersurface satisfying (\ref{umb}).
 Then $M$ also satisfies (\ref{1}). Hence from Theorem~\ref{th0}, $M$ is either totally
  $\eta$-umbilical or a Hopf hypersurface with $\alpha=k=1$, $c=-1$. However, by Proposition~\ref{105},
   the latter case cannot occur. We conclude that $M$ is totally $\eta$-umbilical, so it is locally congruent
    to one of the real hypersurfaces listed in Theorem~\ref{classifyumb}.
\end{proof}

\begin{proof}[Proof of Theorem~\ref{th2}]
Suppose the Jacobi operator is of Codazzi type. Then $M$ satisfies
(\ref{1}) with $k=0$. This contradicts the fact that $k\neq 0$ as stated
 in Theorem~\ref{th0}. The proof is completed.
\end{proof}




Email:

S.H.Kon: shkon@um.edu.my,

Tee-How Loo: looth@um.edu.my,

Shiquan Ren: renshiquan@gmail.com

\end{document}